\title{A Quantum Dual Logarithmic Barrier Method for Linear Optimization}
\author{Zeguan Wu, Pouya Sampourmahani, \\ Mohammadhossein Mohammadisiahroudi, Tam\'as Terlaky}
\affil{Department of Industrial and Systems Engineering, Lehigh University
\\
\vspace{1mm}
[zew220, pos220, mom219, terlaky]@lehigh.edu}
\date{\today}
\begin{document}

\maketitle

\begin{abstract}
Quantum computing has the potential to speed up some optimization methods. One can use quantum computers to solve linear systems via Quantum Linear System Algorithms (QLSAs). QLSAs can be used as a subroutine for algorithms that require solving linear systems, such as the dual logarithmic barrier method (DLBM) for solving linear optimization (LO) problems. In this paper, we use a QLSA to solve the linear systems arising in each iteration of the DLBM. To use the QLSA in a hybrid setting, we read out quantum states via a tomography procedure which introduces considerable error and noise. Thus, this paper first proposes an inexact-feasible variant of DLBM for LO problems and then extends it to a quantum version. Our quantum approach has quadratic convergence toward the central path with inexact directions and we show that this method has the best-known $\mathcal{O}(\sqrt{n} \log (n \mu_0 /\zeta))$ iteration complexity, where $n$ is the number of variables, $\mu_0$ is the initial duality gap, and $\zeta$ is the desired accuracy. We further use iterative refinement to improve the time complexity dependence on accuracy. For LO problems with quadratically more constraints than variables, the quantum complexity of our method has a sublinear dependence on dimension.
\end{abstract}

%%%%%%%%%%%%%%%%%%%%%
\section{Introduction} \label{sec:intro}
%%%%%%%%%%%%%%%%%%%%%
Quantum Computing has drawn a significant amount of attention recently because of its promising speedups compared to classical computing. Such examples include integer factorization \cite{shor1994algorithms} and Quantum Linear System Algorithms (QLSAs) \cite{childs2017quantum,harrow2009quantum}. Due to the wide applications of mathematical optimization, it is natural to design and develop quantum optimization algorithms. There has been an increasing amount of research regarding developing quantum optimization algorithms for solving optimization problems. Such attempts include Quantum Approximation Optimization Algorithms (QAOA) \cite{farhi2014quantum}, Quantum Simplex Algorithm \cite{nannicini2022fast}, and Quantum Interior Point Methods (QIPMs) \cite{augustino2021inexact,augustino2021quantum,casares2020quantum,kerenidis2020quantumipm,mohammadisiahroudi2024efficient,zeguanlcqo}.

QIPMs are quantum versions of classical interior point methods (IPMs) that use QLSAs as an oracle to solve the Newton system at each iteration. Given that the solution extraction process done by quantum tomography introduces errors, inexact IPMs must be used in this context.
There are variants of interior point methods, i.e., primal, dual, and primal-dual. Previous works in the context of QIPMs include \cite{mohammadisiahroudi2023inexact} and \cite{mohammadisiahroudi2024efficient}, which proposed an inexact infeasible QIPM (II-QIPM) and an inexact feasible QIPM (IF-QIPM), respectively, for linear optimization (LO) problems. The algorithms proposed in these works are primal-dual IPM algorithms. In \cite{mohammadisiahroudi2024efficient}, the authors developed an II-QIPM that exploits QLSA to solve the normal equations system (NES) in each iteration. Their algorithm has a total complexity of 
$\tilde{\Ocal}_{n,\kappa_A,\omega}(n^4 L \|A\|^4 \omega^4 \kappa_A^2)$,  where $n$ is the number of variables, $L$ is the bit length of the input data, $\kappa_A$ is the condition number of matrix $A$, and $\omega$ is an upper bound on the norm of optimal solution. Among the systems that we can solve to obtain the Newton direction, NES is the most desirable since the coefficient matrix is positive definite. However, solving NES inexactly introduces primal infeasibility. To address this issue, Mohammadisiahroudi et al. \cite{mohammadisiahroudi2023inexact} proposed a novel system called the orthogonal subspace system (OSS) and also a modified Newton system, which runs in $\tilde{\Ocal}_{n,\kappa_A,\omega}(n^{2.5} L \|A\|^2 \omega^2 \kappa_A)$ time. In a subsequent work \cite{mohammadisiahroudi2023improvements}, they improved their complexity to $ \widetilde{\Ocal}_{ n,\kappa_{\Ahat}, \|\Ahat\|,\|\bhat\|,\mu^0} ( m\sqrt{n} L\bar{\chi}^2 \omega^2)$, where $\bar{\chi}^2$ is an upper bound on the condition number of the Newton system. Evidently, such modification comes at a computational cost. To avoid such preprocessing and modifications, we aim to use the dual logarithmic barrier method. In our algorithm, we solve NES inexactly while maintaining primal feasibility without any modifications.

There are two closely related works to ours in the context of dual logarithmic barrier methods, one in classical and one in quantum settings. First, Bellavia et al. \cite{bellavia2019inexact} proposed a dual logarithmic barrier method for solving semidefinite optimization (SDO) problems with sparse variables. As LO is a special case of SDO, by putting the elements of variables along the diagonal of the semidefinite variable, one can adopt their method for LO. Their method solves a normal equation system at each iteration. They shift the inexactness into primal feasibility, making iterations primal infeasible until optimality. Their method terminates after $\lceil 18 \sqrt{n} \log (\frac{n \mu_0}{\zeta})\rceil$ inexact Newton steps, where  $\mu_0$ is the initial duality gap, and $\zeta$ is the desired accuracy. They achieve this complexity by a tight bound on the residual. In our analysis, we show that our method requires a less strict bound on the residual.

Second, Apers and Gribling\footnote{In \cite{apers2023quantum} the authors use different notations. For consistent and unified presentation, the results of \cite{apers2023quantum} are translated to notations used in this paper.} proposed a quantum algorithm based on an IPM for so-called ``tall'' LO problems, in which $n \ggg m$, that runs in $\sqrt{n}{\rm poly}(m, \log(n), \log(1/\zeta))$, where $n$ is the number of constraints in their dual form problem \cite{apers2023quantum}. Their algorithm avoids dependence on a condition number. Under some mild assumptions and access to QRAM (a quantum-accessible data structure), their framework achieves a quantum speedup under the assumption of $n \ggg m$. To approximate the Hessian and the gradient of the barrier function in the Newton step, they proposed a quantum algorithm for the spectral approximation of the matrix $A A^T$. Further, they migrated the matrix-vector multiplication into a quantum setting using an algorithm that combines spectral approximation with quantum multivariate mean estimation algorithms. As mentioned earlier, their work focuses on tall LOs. To be more specific, they present their result to be competitive for cases where $n \geq \gamma m^{10}$ for some constant $\gamma$. %
Although they refer to some specific applications where such a setting exists, it appears that their result might not show any speedup for general cases of LO problems where $n$ and $m$ are both large and of the same order of magnitude. 
Moreover, they imposed an assumption on the size of the input data. Given their assumption, the self-dual embedding model is not applicable to finding an initial solution due to dimension restriction. Further, if one uses Khachiyan's or big-M methods, there will be a contradiction regarding the complexity. Furthermore, they did not clarify how their algorithm maintains feasibility if the gradient is approximated with some error. 

In addition, in the quantum setting, Augustino et al.~\cite{augustino2021quantum} proposed the first provably convergent quantum interior point for semidefinite optimization. If we apply this framework to LOs, we get the query complexity of $\widetilde{\Ocal}_{n, \kappa, \frac{1}{\zeta}} \left(  n^{2}  \frac{\kappa^2}{\zeta} \right)$ and $\widetilde{\Ocal}_{n, \kappa, \frac{1}{\zeta}} \left(n^{2.5}\right)$ arithmetic operations, where $\kappa$ is the condition number of Newton linear system~\cite{augustino2021quantum}. As mentioned earlier, one can adopt such a framework for solving LOs by defining diagonal matrices of input data. However, their complexity has a linear dependence on the inverse of the precision, making it exponentially costly to acquire a precise solution. Later, that dependence was improved using iterative refinement~\cite{mohammadisiahroudi2023quantum}. 

Further, among the classical works, there exist works \cite{karmarkar1984new,nesterov1994interior, roos1997theory, vaidya1989speeding} based on partial updating which calculates inexact Newton's direction using partial update of the inverse of NES. This idea gives the best complexity of $\Ocal(n^3 L)$ arithmetic operations for solving LO problems where $L$ is the input data length \cite{roos1997theory}. Researchers have used this idea besides concepts of fast matrix multiplication, spectral sparsification, inverse maintenance, and stochastic central path methods \cite{cohen2021,lee2015}. These combined approaches lead to improving the complexity of IPMs to $\Ocal(n^{\omega_0}\log(\frac{n}{\epsilon}))$, where $\omega_0 <2.3729$  is the matrix multiplication constant \citep{brand2020}. Further, \cite{van2020solving} proposed a robust primal-dual interior point method with nearly linear time for tall dense LO problems based on some techniques for efficient implementation, without using fast matrix multiplication. However, similar to \cite{apers2023quantum} there exists an assumption on the dimensions of the problem.

Due to the high cost of solving the Newton linear system within IPMs for large-scale LO problems, some papers have developed first-order methods (FOMs) for solving LO problems. FOMs exploit first-order information, i.e. gradients, to update the solution in each iteration. The cost per iteration of FOMs is significantly better than that of IPMs as they only require some of the matrix-vector products. One of the well-known FOMs is the PDLP algorithm, built upon the primal-dual hybrid gradient (PDHG) method. PDLP enhances PDHG by using presolving, preconditioning, adaptive restart, adaptive choice of step size, and primal weight. 
FOMs do not converge fast and perform well only when the input data matrix is well-conditioned and the problem has proper sharpness. PDHG with restarts has $\Ocal(\kappa \log(1/\zeta))$ iteration compelexity~\cite{applegate2023faster}. In another direction, ADMM-based IPMs (ABIPs) have been developed by solving the nonlinear log-barrier penalty function by the alternating direction method of multipliers (ADMM).  ABIPs have $\Ocal(\kappa^2/\zeta \log(1/\zeta))$ iteration complexity which is not polynomial for acquiring exact solutions~\cite{lu2024first}.

Motivated by \cite{apers2023quantum,bellavia2019inexact}, we propose a quantum version of the dual logarithmic barrier method, which is a dual IPM, for solving general LO problems. This method starts with a feasible interior point, uses Newton's method to explore the dual space, and exploits QLSAs to solve Newton's systems to obtain Newton's directions. In our convergence analysis, we prove quadratic convergence toward the central path despite having an inexact direction. Furthermore, our method has the advantage over these two works of~\cite{apers2023quantum,bellavia2019inexact} in the sense that it requires a less strict condition on the norm of residual and requires a sublinear number of queries with a much-relaxed condition on the dimension of the problem. We ultimately apply the iterative refinement technique to mitigate the dependence on precision. 

The rest of this paper is organized as follows: Sections \ref{sec:prelim} covers the preliminaries needed throughout the paper. Section \ref{sec:IF Dual} describes the analysis of our algorithm in detail and presents the complexity results. Further, the application of iterative refinement is discussed at the end of this section to improve the dependence on precision. Conclusions and future works are explained in Section \ref{sec:concl}.
%%%%%%%%%%%%%%%%%%%%%

%%%%%%%%%%%%%%%%%%%%%
\section{Preliminaries} \label{sec:prelim}
%%%%%%%%%%%%%%%%%%%%%
In this section, we start with some notation definitions and then briefly introduce LO problems, the dual logarithmic barrier method, and QLSA.

\subsection{Notations} \label{sec:not}
Throughout this paper $\Rmbb^n$ denotes the the set of $n$-dimensional vectors of real numbers and $\Cmbb^n$ for the set of $n$-dimensional vectors of complex numbers.
For matrix $M$, $\|M\| = \|M\|_2$ is the spectral norm, and $\|M\|_F$ is the Frobenius norm of the matrix. $\sigma_1(M)$ is the largest singular value of matrix $M$ and $\sigma_0(M)$ is the least nonzero singular value. The condition number of $M$ is defined as $\kappa(M) = \sigma_1(M)/\sigma_0(M)$.
For vectors, we use $e$ for the all-one vector. For two vectors $v_1$ and $v_2$ with the same dimension, we use $v_1 v_2$ for their entry-wise product, $v_1^p$ for the entry-wise power of $p$ for $v_1$. The $\ell_2$ norm of $v$ is denoted as $\|v\|_2$ or simply $\|v\|$ and the $\ell_1$ norm is denoted as $\|v\|_1$. Given a positive semidefinite matrix $M$, we denote $\|v\|_M = \sqrt{v^TMv}$.
For complexity, we use $\Tilde{\mathcal{O}}$ which suppresses the poly-logarithmic factors in the ``Big-O” notation. The quantities of the poly-logarithmic factors are indicated in the subscripts of $\Tilde{\mathcal{O}}$. 
%%%%%%%%%%%%%%%%%%%%%
\subsection{Linear Optimization Problems} \label{sec:LO}
%%%%%%%%%%%%%%%%%%%%%
In this paper, we consider the standard form LO problem 
 defined as follows.
\begin{definition}[Linear Optimization Problem: Standard Form]\label{def:LO}
    For vectors $b\in \mathbb{R}^m,\ c\in \mathbb{R}^n$, and matrix $A\in \mathbb{R}^{m\times n}$ with $\rank(A)=m\leq n,$ we define the primal LO problem as
    \begin{equation}\label{LO: primal}\tag{P}
    \begin{aligned} 
    \min_{x\in \Rmbb^n}\  c^Tx \ {\rm s.t.} \  Ax = b, \ x \geq 0,
    \end{aligned}
    \end{equation}
    and the dual LO problem as
    \begin{equation}\label{LO: dual}\tag{D}
        \begin{aligned}
        \max_{y\in \Rmbb^m,\ s\in \Rmbb^n} \  b^Ty\ \ {\rm s.t.\ } A^Ty + s = c,\ s \geq 0.
        \end{aligned}
    \end{equation}
\end{definition}
We define the set of primal, dual, and primal-dual feasible solutions as
\begin{equation*}
    \begin{aligned}
        \mathcal{P} &\coloneqq \left\{x\in \mathbb{R}^n:\ Ax=b,\ x\geq 0\right\},\\
        \mathcal{D} &\coloneqq \left\{(y,s)\in \mathbb{R}^m\times\mathbb{R}^n:\  A^Ty + s = c,\ s\geq 0\right\},\\
        \mathcal{PD} &\coloneqq \left\{(x,y,s)\in \mathbb{R}^n\times\mathbb{R}^m\times\mathbb{R}^n:\ Ax=b, A^Ty + s = c, (x,s)\geq 0\right\},
    \end{aligned}
\end{equation*}
and the set of interior primal-dual feasible solutions as
\begin{equation*}
    \mathcal{PD}^\circ \coloneqq \left\{(x,y,s)\in \mathbb{R}^n\times\mathbb{R}^m\times\mathbb{R}^n:\ Ax=b, \ A^Ty + s = c,\  (x,s)> 0\right\}.
\end{equation*}
Analogously, we can define the set of interior dual feasible solutions as
\begin{equation*}
    \mathcal{D}^\circ \coloneqq \left\{(y,s)\in \mathbb{R}^m\times\mathbb{R}^n:\ A^Ty + s = c,\  s> 0\right\}.
\end{equation*}

We assume that the following interior point condition (IPC) holds.
\begin{assumption}\label{assumption:ipc}
There exists a solution $(x^0,y^0,s^0)$ such that
\begin{equation*}
    Ax^0=b,\ A^Ty^0+s^0=c, \text{ and } (x^0,s^0)>0.
\end{equation*}
\end{assumption}
Note that feasible IPMs require that iterates stay in the interior of the feasible set. Thus, Assumption~\ref{assumption:ipc} is necessary. However, if it happens to fail, one can embed the LO problem into its self-dual embedding model, which is an LO problem equivalent to the original one and satisfies Assumption~\ref{assumption:ipc}. It is known that when Assumption \ref{assumption:ipc} holds, the following system has unique solution for any $\mu>0$,
\begin{equation*}
    \begin{aligned}
        Ax      &=b,\ x\geq 0\\
        A^Ty + s&=c,\ s\geq 0\\
        XSe     &=\mu e,
    \end{aligned}
\end{equation*}
where $X={\rm diag} (x)$ and $S={\rm diag} (s)$ \cite{roos1997theory}. The solution $\left(x(\mu), y(\mu), s(\mu) \right)$ is called $\mu$-center. When we only consider the dual space, we also call the point $\left(y(\mu), s(\mu) \right)$ by $\mu$-center.
%

%%%%%%%%%%%%%%%%%%%%%%%%%%%%%%%%%%%%%%%%%%%
\subsection{Dual Logarithmic Barrier Method}
\label{sec:DLBM}
%%%%%%%%%%%%%%%%%%%%%%%%%%%%%%%%%%%%%%%%%%%
In this section, we proceed with describing the dual logarithmic barrier method. The dual logarithmic barrier method focuses on the dual problem \eqref{LO: dual}. Note that any method for the dual problem can also be used for solving the primal problem too, because of the symmetry between primal and dual problems \cite{roos1997theory}.

The dual logarithmic barrier method applies Newton's method to the following dual barrier function of the dual problem \eqref{LO: dual},
\begin{equation*}
    \mathcal{L}(y) = - b^T y - \mu \sum_{i=1}^n \log \left(c_i - \left(A^T y\right)_i\right).
\end{equation*}
This is a continuous differentiable convex function and its minimum is obtained when
\begin{equation*}
    \nabla_y \mathcal{L}(y) = -b + \mu A \left(c - A^Ty\right)^{-1} =0,
\end{equation*}
where $\left(c - A^T y\right)^{-1}$ is the entry-wise inverse. Combine this with the dual equality constraint, we have the following nonlinear system
\begin{equation*}
    \begin{aligned}
        -b + \mu A S^{-1}e &=0, \\
        c - A^T y - s &=0.
    \end{aligned}
\end{equation*}
Applying Newton's method to solve the nonlinear system  gives the following Newton linear system,
\begin{equation*}
    \begin{aligned}
        \begin{bmatrix}
            0 & -\mu A S^{-2}\\
            -A^T & -I
        \end{bmatrix}
        \begin{bmatrix}
            \Delta y\\\Delta s
        \end{bmatrix}
        = -
        \begin{bmatrix}
            -b + \mu A S^{-1}e\\
            0
        \end{bmatrix}.
    \end{aligned}
\end{equation*}
Denote $r_p = b - \mu AS^{-1}e.$
The Newton linear system can be simplified into the following normal equation system (NES) as
\begin{equation}\label{eq:nes}
    \left( AS^{-2}A^T\right) \Delta y = \frac{1}{\mu} r_p. \tag{NES}
\end{equation}
It is easy to verify
\begin{align}
    \Delta y &= \frac{1}{\mu} \left( AS^{-2}A^T\right)^{-1}  r_p, \notag\\
    \Delta s &= - A^T\Delta y. \label{eq: delta s delta y}
\end{align}
The dual logarithmic barrier method starts with a strictly feasible dual solution $(y^0,\ s^0)\in \mathcal{D}^\circ$ and a $\mu^0>0$ such that $(y^0,\ s^0)$ is close to the $\mu^0$-center in the sense of the proximity measure $\delta(s^0,\mu^0)$, which is defined as
\begin{equation*}
    \begin{aligned}
        \delta(s, \mu) \coloneqq \left\| s^{-1}\Delta s \right\|_{2}.
    \end{aligned}
\end{equation*}
Then, the iterate moves along the Newton direction and finds a new iterate in the interior of $\mathcal{D}$.
Define 
\begin{equation*}
    \begin{aligned}
        x(s, \mu) = \arg\min_x \left\{ \|\mu e - sx\|\ : \ Ax = b \right\}.
    \end{aligned}
\end{equation*}
According to Theorem II.28 of \cite{roos1997theory}, we have
\begin{equation*}
    \delta(s, \mu) =\frac{1}{\mu} \min_x \left\{ \|\mu e - sx\|\ : \ Ax = b \right\}.
\end{equation*}
Algorithm~\ref{alg:DLBAfNS} is a conceptual dual logarithmic barrier method.
The following results from \cite{roos1997theory} lay the foundation for our analysis.
\begin{lemma}[Lemma II.19 in \cite{roos1997theory}] \label{lemma:feasiblity}
If the Newton step $\Delta s$ satisfies 
\begin{equation*}
    -e \leq s^{-1} \Delta s \leq e,
\end{equation*}
then $x(s, \mu)$ is primal feasible and $s^{+} = s + \Delta s$ is dual feasible. 
\end{lemma}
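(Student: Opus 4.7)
The plan is to split the lemma into its two assertions and to reduce each one to one side of the componentwise hypothesis $-e \le s^{-1}\Delta s \le e$.

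First I would handle dual feasibility of $s^+=s+\Delta s$. The affine condition $A^Ty^+ + s^+ = c$ is automatic: since the current iterate satisfies $A^Ty+s=c$ and $\Delta s = -A^T\Delta y$ by \eqref{eq: delta s delta y}, the update $(y,s)\mapsto(y+\Delta y,\,s+\Delta s)$ preserves the equation. Nonnegativity of $s^+$ then uses only the left half of the hypothesis: multiplying $s^{-1}\Delta s \ge -e$ componentwise by the strictly positive vector $s$ gives $\Delta s \ge -s$, hence $s^+ \ge 0$.

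For primal feasibility of $x(s,\mu)$, the linear constraint $Ax=b$ is built into the definition, so only $x(s,\mu)\ge 0$ requires work. The key step I would carry out is to obtain a closed-form expression for $x(s,\mu)$ in terms of the Newton direction $\Delta s$. Writing the KKT conditions of $\min_x \tfrac{1}{2}\|\mu e - sx\|^2$ subject to $Ax=b$ with multiplier $\lambda$, stationarity gives $s(sx-\mu e) = A^T\lambda$, so
\[
x = \mu s^{-1} + s^{-2}A^T\lambda.
\]
Substituting into $Ax=b$ produces $(AS^{-2}A^T)\lambda = b - \mu AS^{-1}e = r_p$. Comparing this with \eqref{eq:nes} identifies $\lambda = \mu\,\Delta y$, and then \eqref{eq: delta s delta y} yields
\[
x(s,\mu) = \mu s^{-1} - \mu s^{-2}\Delta s = \mu s^{-1}\bigl(e - s^{-1}\Delta s\bigr).
\]
Because $\mu>0$ and $s>0$, the sign of $x(s,\mu)$ is controlled entrywise by the sign of $e - s^{-1}\Delta s$, so $x(s,\mu) \ge 0$ is equivalent to $s^{-1}\Delta s \le e$, which is precisely the right half of the hypothesis.

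The only substantive obstacle is the identification of the Lagrange multiplier with $\mu\,\Delta y$ that produces the explicit formula $x(s,\mu) = \mu s^{-1}(e - s^{-1}\Delta s)$; once this representation is established, both feasibility assertions collapse to componentwise comparisons against the single vector $s^{-1}\Delta s$, and nothing further is required.
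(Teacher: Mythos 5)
Your proof is correct and follows essentially the same route as the standard argument for this cited result (Lemma II.19 of \cite{roos1997theory}, which the paper does not reprove): the closed form $x(s,\mu)=\mu s^{-1}\left(e-s^{-1}\Delta s\right)$, equivalently the identity $\mu s^{-1}\Delta s=\mu e-s\,x(s,\mu)$, is exactly the relation the paper itself invokes later in the proof of Theorem~\ref{theorem: quadratic}. The only point worth making explicit is that identifying the multiplier $\lambda$ with $\mu\,\Delta y$ uses the nonsingularity of $AS^{-2}A^{T}$, which holds because $A$ has full row rank and $s>0$.
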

\begin{lemma}[Theorem II.21 in \cite{roos1997theory}]\label{lemma: quadratic from book}
    If $\delta(s, \mu) \leq 1$, then $x(s, \mu)$ is primal feasible, and $s^+ = s + \Delta s$ is dual feasible. Moreover, 
    \begin{equation*}
        \delta(s^+, \mu) \leq \delta(s, \mu)^2.
    \end{equation*}
\end{lemma}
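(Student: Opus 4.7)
The plan is to split the claim into two parts — feasibility and the quadratic bound — and to use the variational characterization of $\delta(s,\mu)$ given just above the statement as the main lever.

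For the feasibility part, I would invoke \Cref{lemma:feasiblity} directly. Since $\|s^{-1}\Delta s\|_\infty \le \|s^{-1}\Delta s\|_2 = \delta(s,\mu) \le 1$, the componentwise bound $-e \le s^{-1}\Delta s \le e$ holds, so $x(s,\mu)$ is primal feasible and $s^+=s+\Delta s$ is dual feasible.

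For the quadratic bound, I would first derive a clean closed form for $x(s,\mu)$ from the KKT conditions of the optimization problem defining it. Writing the Lagrangian $L(x,y)=\tfrac12\|\mu e - sx\|^2 - y^T(Ax-b)$ and differentiating gives $x = \mu s^{-1} + s^{-2}A^Ty$; substituting into $Ax=b$ recovers the \ref{eq:nes} system with multiplier $y=\mu\Delta y$, and hence
\begin{equation*}
x(s,\mu) = \mu s^{-1} + \mu s^{-2}A^T\Delta y = \mu s^{-1} - \mu s^{-2}\Delta s,
\end{equation*}
using \eqref{eq: delta s delta y}. In particular $sx(s,\mu) = \mu e - \mu s^{-1}\Delta s$, which also re-verifies $\|\mu e - sx(s,\mu)\| = \mu\delta(s,\mu)$.

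Next, I would exploit the variational characterization of $\delta(s^+,\mu)$ by plugging the \emph{same} primal-feasible point $x(s,\mu)$ as a feasible candidate:
\begin{equation*}
\mu\,\delta(s^+,\mu) \;\le\; \|\mu e - s^+ x(s,\mu)\|.
\end{equation*}
The key computation is then
\begin{equation*}
s^+ x(s,\mu) = (s+\Delta s)x(s,\mu) = \mu e - \mu s^{-1}\Delta s + \Delta s\bigl(\mu s^{-1} - \mu s^{-2}\Delta s\bigr) = \mu e - \mu\bigl(s^{-1}\Delta s\bigr)^2,
\end{equation*}
so $\mu e - s^+ x(s,\mu) = \mu (s^{-1}\Delta s)^2$ entrywise. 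Taking $\ell_2$-norms and using the elementary inequality $\|v^2\|_2 \le \|v\|_2^2$ (since $\sum_i v_i^4 \le (\sum_i v_i^2)^2$) yields $\mu\,\delta(s^+,\mu) \le \mu\,\|s^{-1}\Delta s\|^2 = \mu\,\delta(s,\mu)^2$, which is the claim.

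The main obstacle is the algebraic identity $\mu e - s^+ x(s,\mu) = \mu(s^{-1}\Delta s)^2$: the miraculous cancellation of the linear-in-$\Delta s$ terms is what produces the quadratic convergence, and it hinges crucially on using the precise closed form for $x(s,\mu)$ derived from the optimality conditions rather than any approximation. Once that identity is in hand, the rest reduces to a one-line norm inequality.
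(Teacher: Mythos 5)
Your proof is correct and is essentially the standard argument of Theorem II.20/II.21 in \cite{roos1997theory}, which the paper simply cites; indeed, the key identities you derive ($\mu s^{-1}\Delta s = \mu e - s\,x(s,\mu)$ and $\delta(s^+,\mu)\le \frac{1}{\mu}\|\mu e - s^+ x(s,\mu)\|$) are exactly the ones the paper reuses in its proof of Theorem~\ref{theorem: quadratic}. No gaps: the KKT derivation of $x(s,\mu)$, the cancellation giving $\mu e - s^+x(s,\mu)=\mu(s^{-1}\Delta s)^2$, and the inequality $\|v^2\|_2\le\|v\|_2^2$ are all valid.
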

\begin{lemma}[Theorem II.23 of \cite{roos1997theory}] \label{lemma:duality bound}
    Let $\delta \coloneqq \delta(s,\mu) \leq 1$. Then, the duality gap for the primal-dual pair $(x(s,\mu), s)$ satisfies
    \begin{equation*}
        n \mu (1-\delta) \leq s^T x(s,\mu) \leq n \mu (1+\delta).
    \end{equation*}
\end{lemma}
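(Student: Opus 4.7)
The plan is to reduce the statement to an elementary Cauchy--Schwarz estimate, using the characterization of the proximity measure recorded just before the lemma. Since $\delta(s,\mu)\le 1$, Lemma~\ref{lemma:feasiblity} (through the equivalent hypothesis of Lemma~\ref{lemma: quadratic from book}) guarantees that $x(s,\mu)$ is primal feasible, so $s^Tx(s,\mu)$ is an honest duality gap. The key identity to exploit is the one stated immediately after the definition of $x(s,\mu)$, namely
\begin{equation*}
    \mu\,\delta(s,\mu) \;=\; \min_{x:\,Ax=b}\,\bigl\|\mu e - sx\bigr\|_2 \;=\; \bigl\|\mu e - S x(s,\mu)\bigr\|_2,
\end{equation*}
which converts a statement about $\Delta s$ into a statement about the entrywise product vector $v:=Sx(s,\mu)$.

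First, I would set $v:=Sx(s,\mu)\in\Rmbb^n$ and observe the trivial identity $s^Tx(s,\mu)=e^Tv$. The displayed equation above then rewrites as $\|v-\mu e\|_2 \le \mu\delta$. Next, I would apply Cauchy--Schwarz with the all-ones vector $e$, noting $\|e\|_2=\sqrt{n}$:
\begin{equation*}
    \bigl|e^T v - n\mu\bigr| \;=\; \bigl|e^T(v-\mu e)\bigr| \;\le\; \|e\|_2\,\|v-\mu e\|_2 \;\le\; \sqrt{n}\,\mu\,\delta.
\end{equation*}
Rearranging gives $n\mu - \sqrt{n}\,\mu\delta \le s^Tx(s,\mu) \le n\mu + \sqrt{n}\,\mu\delta$.

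Finally, to match the form stated in the lemma I would weaken $\sqrt{n}$ to $n$ on both sides (valid for $n\ge 1$), yielding
\begin{equation*}
    n\mu(1-\delta) \;\le\; n\mu-\sqrt{n}\,\mu\delta \;\le\; s^Tx(s,\mu)\;\le\; n\mu+\sqrt{n}\,\mu\delta \;\le\; n\mu(1+\delta),
\end{equation*}
which is the desired bound. There is no real obstacle here: the entire argument rests on having the alternative formula for $\delta(s,\mu)$ in terms of $\|\mu e - Sx(s,\mu)\|_2$, after which the lemma is a one-line Cauchy--Schwarz estimate. The only mild subtlety worth flagging in the write-up is that the bound obtained this way is actually tighter by a factor of $\sqrt{n}$ than what is stated; the weaker form $n\mu(1\pm\delta)$ is chosen for consistency with the forms used in subsequent convergence estimates.
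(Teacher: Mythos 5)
Your proof is correct and is essentially the standard argument behind the cited Theorem II.23 of \cite{roos1997theory} (the paper itself only imports this lemma without reproving it): one uses the identity $\mu\,\delta(s,\mu)=\|\mu e - Sx(s,\mu)\|_2$ together with Cauchy--Schwarz against $e$. Your observation that the argument actually yields the sharper bound $|s^Tx(s,\mu)-n\mu|\le\sqrt{n}\,\mu\delta$ is also consistent with the source, which records the tighter $\sqrt{n}$ form; the weaker $n\mu(1\pm\delta)$ statement follows immediately since $\sqrt{n}\le n$.
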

\begin{lemma}[Theorem II.25 of \cite{roos1997theory}]\label{lemma: original IPM complexity}
    If $\tau = 1/\sqrt{2}$ and $\theta= 1/(3\sqrt{n})$, then the dual logarithmic barrier algorithm with full Newton steps requires at most 
    \begin{equation*}
        \left\lceil 3\sqrt{n} \log\left(\frac{n\mu^0}{\epsilon}\right) \right\rceil
    \end{equation*}
    iterations. The output is a feasible primal-dual pair $(x,s)$ such that $x^T s \leq 2 \epsilon$.
\end{lemma}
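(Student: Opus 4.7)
The plan is to show that the algorithm maintains the proximity invariant $\delta(s,\mu)\leq \tau = 1/\sqrt{2}$ throughout, use this to bound the duality gap, and then count the number of iterations needed to drive $\mu$ small enough.

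First, I would establish that the invariant is preserved across one iteration of the form ``take a full Newton step at $\mu$, then update $\mu^+ = (1-\theta)\mu$.'' Start from $\delta(s,\mu)\leq \tau = 1/\sqrt{2}$. By Lemma~\ref{lemma: quadratic from book}, the full Newton step produces a dual feasible $s^+$ with
\begin{equation*}
    \delta(s^+, \mu) \;\leq\; \delta(s,\mu)^2 \;\leq\; \tfrac12.
\end{equation*}
The next step is to show that reducing the barrier parameter to $\mu^+ = (1-\theta)\mu$ cannot blow the proximity past $\tau$. A standard calculation relates $\delta(s^+,\mu^+)$ to $\delta(s^+,\mu)$ via
\begin{equation*}
    \delta(s^+,\mu^+) \;\leq\; \frac{\delta(s^+,\mu) + \theta\sqrt{n}}{1-\theta},
\end{equation*}
which follows from the definition of $\delta$ as a minimum over primal feasible $x$ and the identity $\mu^+ e - s^+ x = (1-\theta)(\mu e - s^+ x) + \theta s^+ x - \theta s^+ \mu/\mu \cdot e$ after some rearrangement; plugging in $\delta(s^+,\mu)\leq 1/2$ and $\theta = 1/(3\sqrt{n})$ gives $\delta(s^+,\mu^+) \leq (1/2 + 1/3)/(1-\theta) \leq 1/\sqrt{2}$ for $n\geq 1$. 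This is the main technical step and the place I expect the estimates to be the tightest; it is the only nontrivial inequality in the argument.

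Once the invariant is confirmed, the iteration counting is routine. After $k$ outer iterations $\mu^k = (1-\theta)^k \mu^0$, and by Lemma~\ref{lemma:duality bound} together with $\delta(s^k,\mu^k)\leq \tau \leq 1$,
\begin{equation*}
    (x^k)^T s^k \;\leq\; n\mu^k\bigl(1+\tau\bigr) \;\leq\; 2n\mu^k.
\end{equation*}
To guarantee $(x^k)^T s^k \leq 2\epsilon$ it suffices to force $\mu^k \leq \epsilon/n$, i.e., $(1-\theta)^k \mu^0 \leq \epsilon/n$. Taking logarithms and using $-\log(1-\theta) \geq \theta$ gives
\begin{equation*}
    k \;\geq\; \frac{1}{\theta}\log\!\left(\frac{n\mu^0}{\epsilon}\right) \;=\; 3\sqrt{n}\,\log\!\left(\frac{n\mu^0}{\epsilon}\right),
\end{equation*}
so $k = \lceil 3\sqrt{n}\log(n\mu^0/\epsilon)\rceil$ iterations suffice. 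Primal feasibility of the returned $x^k = x(s^k,\mu^k)$ comes for free from Lemma~\ref{lemma:feasiblity} (equivalently from Lemma~\ref{lemma: quadratic from book}), and dual feasibility of $s^k$ is maintained step by step, giving the claimed feasible primal-dual pair with $x^T s \leq 2\epsilon$.
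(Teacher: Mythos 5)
The overall architecture of your argument (preserve the invariant $\delta\leq\tau$, bound the gap via Lemma~\ref{lemma:duality bound}, count iterations from $(1-\theta)^k\mu^0\leq\epsilon/n$ using $-\log(1-\theta)\geq\theta$) is the right one, and the iteration-counting and feasibility parts are fine. Note that the paper itself gives no proof of this lemma --- it is imported verbatim from Theorem~II.25 of Roos--Terlaky--Vial --- so what matters is whether your reconstruction of the invariant step actually closes.

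It does not, and the failure is exactly at the step you flag as ``the only nontrivial inequality.'' Your triangle-inequality estimate
\begin{equation*}
    \delta(s^+,\mu^+) \;\leq\; \frac{\delta(s^+,\mu)+\theta\sqrt{n}}{1-\theta}
\end{equation*}
is a correct inequality, but it is too weak: with $\delta(s^+,\mu)\leq 1/2$ and $\theta\sqrt{n}=1/3$ the right-hand side is $\frac{5/6}{1-\theta}\geq 5/6\approx 0.833$, which is strictly larger than $1/\sqrt{2}\approx 0.707$ (the division by $1-\theta<1$ only makes it worse). Your claim that this is $\leq 1/\sqrt{2}$ for $n\geq 1$ is numerically false, so the invariant is never re-established and the induction collapses. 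The book's proof avoids this by working with the squared proximity and exploiting structure that the triangle inequality throws away: writing $h=(s^{-1}\Delta s)^2\geq 0$, one has the exact identity $\mu e - s^+x(s,\mu)=\mu h$ together with $e^Th=\|s^{-1}\Delta s\|_2^2=\delta(s,\mu)^2$, whence
\begin{equation*}
    \delta(s^+,\mu^+)^2 \;\leq\; \frac{\|h-\theta e\|^2}{(1-\theta)^2} \;=\; \frac{\|h\|^2-2\theta\,\delta(s,\mu)^2+\theta^2 n}{(1-\theta)^2} \;\leq\; \frac{\delta(s,\mu)^4-2\theta\,\delta(s,\mu)^2+\theta^2 n}{(1-\theta)^2}.
\end{equation*}
The negative cross term $-2\theta\delta^2$, which your bound discards, is what makes the numbers work: maximizing over $\delta^2\in[0,1/2]$ and substituting $\theta=1/(3\sqrt{n})$ gives a bound that is at most $13/36<1/2=\tau^2$ for all $n\geq 1$. (This is also structurally the same computation the paper redoes, with error terms, in its own inexact lemma following Theorem~\ref{theorem: quadratic}, via the quantity $\|h\|^2-2\rho h^T(e-h)+\rho^2\|e-h\|^2$.) To repair your proof you must replace the additive triangle-inequality step with this quadratic estimate; everything else you wrote can stand.
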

In Lemma~\ref{lemma: original IPM complexity}, the complexity is also called IPM complexity because it tells how many IPM iterations are needed for an IPM to converge to the target accuracy. In Algorithm~\ref{alg:DLBAfNS}, the IPM complexity is the number of loops in the ``while" loop. In each IPM iteration, one needs to construct and solve Newton's linear system and update iterates.
\begin{algorithm}[H]
\caption{Dual Logarithmic Barrier Algorithm with full Newton steps \cite{roos1997theory}}\label{alg:DLBAfNS}
\begin{algorithmic}
\State \textbf{Input}: $\tau = 1/\sqrt{2}$, $\theta = 1/(3\sqrt{n})$, $\epsilon > 0$, $(y^0,s^0) \in \mathcal{D}^\circ$, and $\mu^0 > 0$ such that $\delta(s^0,\mu^0)\leq \tau$.
\State \textbf{begin}\\
    $s\coloneqq s^0; \mu \coloneqq \mu^0$;
    \While{$n \mu \geq (1-\theta) \epsilon$}
    \State $s \coloneqq s + \Delta s$;
    \State $\mu \coloneqq (1-\theta) \mu$;
    \EndWhile
    \State \textbf{end}
    \State \textbf{return} $(y,s)$
\end{algorithmic}
\end{algorithm}

\subsection{Quantum Linear System Algorithm} 
In this section, we start with the introduction of linear system problems (LSPs) and quantum linear system problems (QLSPs). Then, we introduce the QLSA and quantum tomography algorithm (QTA) we use in this work and discuss how to use them to solve LSPs in the IPM setting.
In this work, we assume the access to QRAM.

\subsubsection{LSP \& QLSP}
\begin{definition}[Linear System Problem (LSP)]
    The problem of finding a vector $z \in \Rmbb^n$ such that it satisfies $Mz=v$ with the coefficient matrix $M \in \Rmbb^{n \times n}$ and right-hand-side (RHS) vector $v \in \Rmbb^{n}$.
\end{definition}
LSPs could have zero, one, or infinitely many solutions. In practice especially in optimization algorithms, LSPs have a unique solution.
For general matrices, LSPs can be solved using Gaussian elimination with $\mathcal{O}(n^3)$ arithmetic operations.
When the matrices are symmetric but indefinite, LSPs can be solved exactly using Bunch–Parlett factorization with $\mathcal{O}\left(n^3\right)$ arithmetic operations \cite{bunch1971direct}.
When the matrices are symmetric and positive definite, LSPs can be solved using methods including Cholesky factorization and the Conjugate Gradient method. Among the classical methods for symmetric positive definite LSPs, the Conjugate Gradient method has the best complexity with respect to $n$ as $\mathcal{O}(nd\sqrt{\kappa}\log(1/\epsilon))$, where $d$ is the maximum number of non-zero elements in any row or column of $M$, $\kappa$ is the condition number of $M$, and $\epsilon$ is the error allowed. The price for advantage with respect to dimension is unfavorable dependence on condition number.

To define LSPs in the quantum setting, we need to introduce some notations from the quantum computing literature. The notation of ket ($\ket{\cdot}$) is used to denote quantum states. $\ket{i}$ is the $i$th computational basis quantum state, which corresponds to a classical column vector with the $i$th entry being $1$ and the rest zero.
We define LSPs in the quantum setting as follows.
\begin{definition}[Quantum Linear System Problem (QLSP)] \label{def:qlsp}
    Let $M \in \Cmbb^{n \times n}$ be a Hermitian matrix with $\|M\|_2=1$, $v \in \Cmbb^{n}$, and $z \coloneqq M^{-1} v$. We define quantum states 
    \begin{equation*}
        \ket{v} = \frac{\sum_{i=1}^{n} v_i \ket{i}}{\|\sum_{i=1}^{n} v_i \ket{i}\|}, \qquad \text{ and } \qquad \ket{z} = \frac{\sum_{i=1}^{n} z_i \ket{i}}{\|\sum_{i=1}^{n } z_i \ket{i}\|}.
    \end{equation*}
    For target precision $0 < \epsilon_{QLSP}$, the goal is to find $\ket{\ztilde}$ such that $\|\ket{\ztilde} - \ket{z}\|_2 \leq \epsilon_{QLSP}$, succeeding with probability $\Omega(1)$. 
\end{definition}

As mentioned earlier, a QIPM is an IPM that uses QLSA to solve the Newton system. To give a linear system as an input to QLSA, we need to transform the LSP into a QLSP. Moreover, the output of QLSA is a quantum state, which is not directly translatable to the classical setting. Thus, we have to use a QTA to extract the classical solution.

To transform an LSP into a QLSP, we need to determine how to encode the input data in the quantum setting. There exist two major methods for this purpose: (1) \textit{sparse-access model}, and (2) \textit{quantum operator input model}. In this paper, we choose the latter option which, according to \cite{chakraborty2018power}, is more efficient than the former when QRAM is available. 

\subsubsection{Quantum Operator Input Model}
In this model, one has access to a unitary that stores the matrix $M$,
\begin{equation*}
    U = 
    \begin{bmatrix}
        M / \alpha & . \\
        . & .
    \end{bmatrix},
\end{equation*}
where $\alpha \geq \|M\|$ is a normalization factor chosen to ensure the existence of unitary matrix $U$.
The following definition of block encoding is introduced in \cite{chakraborty2018power} with slight restatement. 
\begin{definition}[Block encoding] \label{def:blockencoding}
Let $M \in \Cmbb^{2^w \times 2^w}$ be a $w$-qubit operator. 
Then, a $(w + a)$-qubit unitary $U$ is an $(\alpha, a, \xi)$-block encoding of $M$ if 
$ U = \begin{bmatrix}
\widetilde{M} & \cdot \\
\cdot & \cdot
\end{bmatrix}$,
such that 
\begin{equation*}
    \| \alpha \widetilde{M} - M \|_2 \leq \xi.
\end{equation*}
An $(\alpha, a, \xi)$-block encoding of $M$ is said to be efficient if it can be implemented in time $T_U = \Ocal \big(\textup{poly} (w)\big).$
\end{definition}
Note that Definition~\ref{def:blockencoding} is equivalent to the following property:
\begin{equation*}
    \| M - \alpha (\bra{0}^{\otimes a} \otimes I_{2^w}) U (\ket{0}^{\otimes a} \otimes I_{2^w}) \| \leq \xi. 
\end{equation*}
In other words, we refer unitary $U$ as an $(\alpha, a, \xi)$-block encoding of $M$, where $a$ is the number
of extra qubits and $\epsilon$ is the error bound for the implementation of the block-encoding.

The following results are presented in the works of 
\cite{gilyen2019qsvt} and \citep{chakraborty2018power}.
\begin{lemma}[Lemma 50 in \cite{gilyen2019qsvt}]\label{lem:beQRAM}
Let $M \in \mathbb{C}^{2^w \times 2^w}$ be a $w$-qubit operator. If $M$ is stored in a quantum-accessible data structure, then there exist unitaries $U_R$ and $U_L$ that can be implemented in $\mathcal{O} (\textup{poly}(w \log (1/\xi)))$ time and $U_R^{\dagger} U_L$ is a $(\| M \|_F, w + 2, \xi)$-block encoding of $M$.
\end{lemma}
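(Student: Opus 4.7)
The plan is to use the binary-tree QRAM data structure storing $M$ to build two state-preparation unitaries $U_L$ and $U_R$ whose product, after postselection on an ancilla register being zero, realizes $M/\|M\|_F$. Recall that the QRAM for $M$ consists of a family of trees: each internal node in the tree for row $i$ stores the sum of squared magnitudes of the entries $M_{ij}$ in the leaves below it, and a top-level tree stores $\|M_i\|^2$ at its $i$th leaf. By construction, the sub-root at row $i$ holds $\|M_i\|^2$ and the global root holds $\|M\|_F^2$. Each node can be queried in $\Ocal(\textup{poly}(w))$ time, and controlled rotations whose angles are computed from these partial sums allow coherent preparation of the row states $\ket{\psi_i} := \frac{1}{\|M_i\|}\sum_j M_{ij}\ket{j}$ and the norm-weighted state $\ket{\phi} := \frac{1}{\|M\|_F}\sum_i \|M_i\|\ket{i}$ in $\Ocal(\textup{poly}(w \log(1/\xi')))$ time, each to additive $\ell_2$ error $\xi'$.

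\textbf{Construction and ideal verification.} Using the two preparation subroutines, I would define
\begin{align*}
U_L\,\bigl(\ket{0}^{\otimes w}\otimes\ket{j}\bigr) &= \ket{\phi}\otimes\ket{j} = \tfrac{1}{\|M\|_F}\sum_i \|M_i\|\,\ket{i}\ket{j},\\
U_R\,\bigl(\ket{i}\otimes\ket{0}^{\otimes w}\bigr) &= \ket{i}\otimes\ket{\psi_i} = \tfrac{1}{\|M_i\|}\sum_j M_{ij}\,\ket{i}\ket{j},
\end{align*}
extended to full unitaries on the combined register in any convenient way. A direct matrix-element computation then yields
\begin{equation*}
\bigl(\bra{i}\bra{0}^{\otimes w}\bigr)\,U_R^{\dagger}U_L\,\bigl(\ket{0}^{\otimes w}\ket{j}\bigr) = \frac{\|M_i\|}{\|M\|_F}\cdot\frac{M_{ij}}{\|M_i\|} = \frac{M_{ij}}{\|M\|_F},
\end{equation*}
so in the exact-arithmetic case the top-left block of $U_R^{\dagger}U_L$ equals $M/\|M\|_F$. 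Two extra ancilla qubits are required to carry the sign (or phase) flags read from the leaves of the two trees, which is where the ``$w+2$'' in the stated block-encoding dimension originates.

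\textbf{Error analysis and main obstacle.} The remaining step is the passage from the ideal to the $\xi$-approximate setting. Each rotation angle used inside $U_L$ or $U_R$ is computed from a finite-precision QRAM entry, so the realized unitaries $\widetilde{U}_L, \widetilde{U}_R$ differ from their ideal counterparts by $\Ocal(\xi')$ in operator norm. Subadditivity under products gives $\|\widetilde{U}_R^{\dagger}\widetilde{U}_L - U_R^{\dagger}U_L\| \leq 2\xi'$, so that the top-left block deviates from $M/\|M\|_F$ by at most $2\xi'$ in spectral norm; multiplying by the normalization $\|M\|_F$ as required by Definition~\ref{def:blockencoding}, setting $\xi' = \xi/(2\|M\|_F)$ delivers the claimed accuracy. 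The classical cost of computing each angle to precision $\xi'$ is $\Ocal(\textup{poly}(w\log(1/\xi')))$, matching the stated runtime. I expect this error-propagation step to be the main obstacle: one must verify that the per-node precision can be chosen small enough to control the global additive error in a superposition over $2^w$ branches without inflating the dependence beyond $\textup{poly}(w\log(1/\xi))$, and one must handle complex phases (rather than merely real signs) so that they accumulate correctly along coherent tree walks.
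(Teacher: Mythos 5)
This lemma is imported verbatim from \cite{gilyen2019qsvt} (their Lemma 50) and the paper offers no proof of its own; your reconstruction via the Kerenidis--Prakash tree data structure, the two state-preparation unitaries $U_L,U_R$, and the inner-product computation of the top-left block is exactly the standard argument from the cited source, including the error propagation through finite-precision rotation angles. One small detail: $\bigl(\bra{i}\bra{0}^{\otimes w}\bigr)U_R^{\dagger}$ conjugates the row amplitudes, so the construction as written block-encodes $\overline{M}$ rather than $M$; for complex $M$ one should prepare the rows of $\overline{M}$ in $U_R$ (for the real matrices arising in this paper the distinction is immaterial).
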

\begin{lemma}[Lemma 4 in \cite{chakraborty2018power}] 
  \label{prop:product}
  (Product of block-encoded matrices)
If $U_1$ is an $(\alpha_1, a_1, \xi_1)$-block-encoding of an $w$-qubit operator $M_1$, and $U_2$ is a $(\alpha_2, a_2, \xi_2)$-block-encoding of an $w$-qubit operator $M_2$, then $(I_{2^{a_2}} \otimes U_1)(I_{2^{a_1}} \otimes U_2)$ is an $(\alpha_1 \alpha_2, a_1+a_2, \alpha_1\xi_2 + \alpha_2 \xi_1)$-block-encoding of $M_1 M_2$. 
\end{lemma}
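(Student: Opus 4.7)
The plan is to unpack the block-encoding definition directly and combine the two hypotheses by a telescoping identity. First, I would introduce the shorthand $\widetilde{M}_i := (\langle 0|^{\otimes a_i} \otimes I_{2^w})\, U_i\, (|0\rangle^{\otimes a_i} \otimes I_{2^w})$ for the top-left block extracted from $U_i$, so that Definition~\ref{def:blockencoding} together with the normalization convention $\|M_i\| \leq \alpha_i$ gives $\|M_i - \alpha_i \widetilde{M}_i\| \leq \xi_i$ for $i=1,2$. Because $\widetilde{M}_i$ is a principal sub-block of a unitary, one also has $\|\widetilde{M}_i\| \leq 1$, which I will need as a quantitative bound in step three.

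Second, I would compute the top-left $2^w$-dimensional block of the product operator $V := (I_{2^{a_2}} \otimes U_1)(I_{2^{a_1}} \otimes U_2)$, treated as a $(w + a_1 + a_2)$-qubit unitary with the ancilla registers arranged so that the $a_1$ ancillas attached to $U_1$ sit adjacent to the $w$-qubit data register and the $a_2$ ancillas attached to $U_2$ sit outermost. Conjugating $V$ by the projector $\bra{0}^{\otimes (a_1+a_2)} \otimes I_{2^w}$ and $\ket{0}^{\otimes (a_1+a_2)} \otimes I_{2^w}$ on the ancillas, then inserting a resolution of identity on the intermediate $a_1$-ancilla register, only the $\ket{0}^{\otimes a_1}\bra{0}^{\otimes a_1}$ summand survives because the outer ancilla projectors annihilate the other terms. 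This reduces the top block of $V$ to $\widetilde{M}_1 \widetilde{M}_2$.

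Third, I would apply the telescoping identity
\begin{equation*}
M_1 M_2 - \alpha_1 \alpha_2\, \widetilde{M}_1 \widetilde{M}_2 \;=\; (M_1 - \alpha_1 \widetilde{M}_1)\, M_2 \;+\; \alpha_1 \widetilde{M}_1\, (M_2 - \alpha_2 \widetilde{M}_2),
\end{equation*}
and invoke the triangle inequality together with $\|\widetilde{M}_1\| \leq 1$, $\|M_2\| \leq \alpha_2$, and the two hypothesis bounds to obtain $\|M_1 M_2 - \alpha_1 \alpha_2\, \widetilde{M}_1 \widetilde{M}_2\| \leq \alpha_2 \xi_1 + \alpha_1 \xi_2$. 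This is exactly the content of the claim that $V$ is an $(\alpha_1 \alpha_2,\, a_1 + a_2,\, \alpha_1 \xi_2 + \alpha_2 \xi_1)$-block-encoding of $M_1 M_2$.

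I expect the main obstacle to be the bookkeeping in step two: correctly aligning the tensor-product factors $I_{2^{a_2}} \otimes U_1$ and $I_{2^{a_1}} \otimes U_2$ with the combined $(a_1 + a_2)$-ancilla layout of the joint block-encoding, and verifying that projecting both ancilla registers onto $\ket{0}$ really does produce the ordinary matrix product $\widetilde{M}_1 \widetilde{M}_2$ rather than a tensor-entangled object. Once that structural identification is in place, the error estimate in step three is routine and the normalization $\alpha_1 \alpha_2$ falls out of the scaling.
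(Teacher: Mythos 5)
This lemma is quoted in the paper as Lemma~4 of \cite{chakraborty2018power} and is not proved there, so there is no in-paper argument to compare against; your reconstruction is the standard proof from the cited source (projection of the product unitary factorizes into $\widetilde{M}_1\widetilde{M}_2$ because each $U_i$ acts as the identity on the other's ancillas, followed by the telescoping estimate), and it is correct. One small point worth flagging: Definition~\ref{def:blockencoding} as stated only guarantees $\|M_2\|\leq \alpha_2+\xi_2$ rather than $\|M_2\|\leq \alpha_2$, so bounding $\|(M_1-\alpha_1\widetilde{M}_1)M_2\|$ literally yields $\xi_1(\alpha_2+\xi_2)$ and hence the slightly weaker total error $\alpha_1\xi_2+\alpha_2\xi_1+\xi_1\xi_2$; the clean bound requires the normalization convention $\|M_i\|\leq\alpha_i$ that you explicitly invoke, which is the convention used in the source. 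Also, your step two can be shortened: rather than inserting a resolution of identity on the intermediate $a_1$ register, simply observe that the projector onto $\ket{0}^{\otimes a_1}$ commutes with $I_{2^{a_1}}\otimes U_2$ (and likewise for $a_2$ and $I_{2^{a_2}}\otimes U_1$), so the two projections slide onto their respective unitaries and the top-left block factorizes immediately.
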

The results referred to above require QRAM, which is studied in \cite{kerenidis2016quantumrecommendation}. It allows efficient block encoding of matrices and state preparation of vectors. The physical implementation of QRAM remains an open research topic \cite{giovannetti2008quantum}. In this work, we assume the existence of QRAM.
Exploiting QRAM, one can implement an $\xi$-approximate block-encoding of $M$ with $\Ocal(\text{polylog}(\frac{n}{\xi}))$ complexity \cite{kerenidis2016quantumrecommendation}. This merely generates a polylogarithmic overhead for the total complexity. The accuracy $\xi$ here is different from the target accuracy of our QIPM. It can be shown that the accuracy $\xi$ here is a polynomial of the target accuracy of our QIPM, making the time complexity of matrices block-encoding and vectors state preparation negligible compared with the time complexity from QLSA/QTA. 

\subsubsection{QLSA \& QTA}
In our work, we use the quantum singular value transformation (QSVT) from \cite{gilyen2019qsvt} as our quantum linear system algorithm (QLSA), and QTA from \cite{van2023quantum} to solve LSPs.
When using QLSA to solve an LSP, one has to load the classical data into a quantum computer and store them properly. In this work, we assume the existence of QRAM and choose to use block encoding to preprocess the data. After properly storing the classical data in QRAM, the QLSA will output a quantum state encoding the solution of the LSP. One has to use QTA to extract a classical representation of the solution from the quantum state.
The extracted classical solution is a unit vector and it is inexact. There are algorithms that estimate the norm of the actual LSP solution but these estimations are still inexact. 
In this work, we analyze the needed accuracy for the QLSA and the QTA to ensure the expected convergence of the proposed algorithm, despite the inexactness.
We discuss the details in Section~\ref{sec: per ite complexity}.
% 
%%%%%%%%%%%%%%%%%%%%%
\section{Inexact Feasible Dual Logarithmic Barrier Method} \label{sec:IF Dual}
In this section, first, we prove the polynomial iteration complexity of the proposed inexact feasible dual logarithmic barrier algorithm. Then,  we analyze how the inexactness of the Newton steps affects the complexity of the dual logarithmic barrier method.

\subsection{Polynomial Complexity}
In this section, we discuss how the inexactness of quantum algorithms affects the Newton step and analyze the complexity of the inexact dual logarithmic barrier method. As mentioned in the previous section, we use quantum algorithms to solve \eqref{eq:nes}, which introduces inexactness into Newton's directions. 
Let $\Delta\Tilde{y}$ be an inexact solution with inexactness $\mathcal{E}_{\Delta y}^C $, where superscription $C$ stands for classical since $\Delta \tilde y$ is a classical approximation of $\Delta y$. Let $r_{\rm NES}$ be the residual of \eqref{eq:nes}, then we have
\begin{equation*}
    \begin{aligned}
        \Delta\Tilde{y} &= \Delta y + \mathcal{E}_{\Delta y}^C,\\
        r_{\rm NES} &= \left( AS^{-2}A^T\right) (\Delta y + \mathcal{E}_{\Delta y}^C ) - \frac{1}{\mu} r_p.
   \end{aligned}
\end{equation*}
Let $\Delta \tilde{s}$ be the inexact solution computed from $\Delta\Tilde{y}$ using \eqref{eq: delta s delta y} and $\mathcal{E}_{\Delta s}^C$ be the corresponding inexactness, then
\begin{equation*}
    \mathcal{E}_{\Delta s}^C = - A^T \left( \left( AS^{-2}A^T\right)^{-1} r_{\rm NES} \right) = -A^T \mathcal{E}_{\Delta y}^C .
\end{equation*}
In the original dual logarithmic barrier method, the Newton step is accurate and thus the full Newton step can guarantee the feasibility of the new iterate, as described in Lemma \ref{lemma: quadratic from book}. In the quantum case, we only have an inexact Newton step. However, the feasibility of the new iterate after taking one full Newton step with proper conditions still can be proved. Moreover, local quadratic convergence still holds.
\begin{theorem}\label{theorem: quadratic}
If $\delta(s, \mu) \leq 0.5$, then $x(s, \mu)$ is primal feasible. Furthermore, if $\left\|s^{-1} \mathcal{E}_{\Delta s}^C\right\|_{2}\leq \frac{1}{3} \delta(s, \mu)^2$, then $s^+ = s + \Delta s + \mathcal{E}_{\Delta s}^C$ is dual feasible. Moreover, 
\begin{equation*}
    \delta(s^+, \mu) \leq 1.5 \delta(s, \mu)^2.
\end{equation*}
\end{theorem}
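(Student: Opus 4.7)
The plan is to split the statement into three parts --- primal feasibility of $x(s,\mu)$, dual feasibility of $s^+$, and the quadratic proximity bound --- and handle them in that order, since the third is where the inexactness really interacts with the analysis.

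Primal feasibility is essentially free: since $\delta(s,\mu)\leq 0.5\leq 1$, Lemma~\ref{lemma: quadratic from book} directly yields that $x(s,\mu)$ is primal feasible. For dual feasibility of $s^+=s+\Delta s+\mathcal{E}_{\Delta s}^C$, the equality $A^T y^+ + s^+=c$ is automatic because $\Delta s=-A^T\Delta y$ and $\mathcal{E}_{\Delta s}^C=-A^T\mathcal{E}_{\Delta y}^C$, so the inexact Newton direction still lies in the dual affine subspace. Strict positivity reduces to the $\ell_\infty\leq\ell_2$ estimate $\|s^{-1}(\Delta s+\mathcal{E}_{\Delta s}^C)\|_\infty\leq \|s^{-1}\Delta s\|_2+\|s^{-1}\mathcal{E}_{\Delta s}^C\|_2\leq \delta+\tfrac{1}{3}\delta^2\leq \tfrac{1}{2}+\tfrac{1}{12}<1$, which forces $s^+>0$ componentwise.

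For the proximity bound, the idea is to use $x(s,\mu)$ itself as a feasible candidate in the minimization characterization of $\delta$, giving $\delta(s^+,\mu)\leq \tfrac{1}{\mu}\|\mu e - s^+ x(s,\mu)\|_2$. To make this useful I need a closed form for $x(s,\mu)$. Writing the KKT conditions of $\min\{\|\mu e - sx\|^2:\ Ax=b\}$, the multiplier equation becomes $A s^{-2}A^T\lambda = b - \mu A s^{-1}$; comparing with \eqref{eq:nes} identifies $\lambda=\mu\Delta y$ and yields $x(s,\mu)=\mu s^{-1}-\mu s^{-2}\Delta s$, equivalently $\mu e - s\,x(s,\mu)=\mu s^{-1}\Delta s$. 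Substituting into $\mu e - s^+ x(s,\mu)=(\mu e - s\,x(s,\mu)) - (\Delta s+\mathcal{E}_{\Delta s}^C)\,x(s,\mu)$ produces exact cancellation of the first-order term, leaving $\mu(s^{-1}\Delta s)^2 - \mathcal{E}_{\Delta s}^C\,x(s,\mu)$.

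Dividing by $\mu$ and applying the triangle inequality, the first term is bounded via $\|(s^{-1}\Delta s)^2\|_2=\|s^{-1}\Delta s\|_4^2\leq \|s^{-1}\Delta s\|_2^2=\delta^2$. For the error term, rewriting $x(s,\mu)=\mu s^{-1}(e - s^{-1}\Delta s)$ gives $\tfrac{1}{\mu}\|\mathcal{E}_{\Delta s}^C\,x(s,\mu)\|_2\leq \|e-s^{-1}\Delta s\|_\infty\|s^{-1}\mathcal{E}_{\Delta s}^C\|_2\leq (1+\delta)\cdot\tfrac{1}{3}\delta^2$. Adding these and using $\delta\leq 0.5$ yields $\delta(s^+,\mu)\leq \bigl(1+\tfrac{1+\delta}{3}\bigr)\delta^2\leq 1.5\,\delta^2$. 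The main obstacle is the identification $x(s,\mu)=\mu s^{-1}-\mu s^{-2}\Delta s$; without this first-order cancellation the leading term in $\mu e - s^+ x(s,\mu)$ stays linear in $\delta$ and no quadratic bound is possible. Once that identity is in hand the rest is routine $\ell_2$/$\ell_\infty$ bookkeeping.
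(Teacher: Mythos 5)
Your proposal is correct and follows essentially the same route as the paper's proof: primal feasibility via Lemma~\ref{lemma: quadratic from book}, dual feasibility from the componentwise bound $\|s^{-1}(\Delta s+\mathcal{E}_{\Delta s}^C)\|_\infty\leq \delta+\tfrac13\delta^2<1$, and the quadratic bound by plugging $x(s,\mu)$ into the minimization characterization of $\delta(s^+,\mu)$, cancelling the first-order term via $\mu e - s\,x(s,\mu)=\mu s^{-1}\Delta s$, and estimating the error term by $\|e-s^{-1}\Delta s\|_\infty\|s^{-1}\mathcal{E}_{\Delta s}^C\|_2\leq 1.5\cdot\tfrac13\delta^2$. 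The only cosmetic difference is that you derive the identity $x(s,\mu)=\mu s^{-1}-\mu s^{-2}\Delta s$ directly from the KKT conditions of the least-squares problem, whereas the paper cites Theorem II.20 of \cite{roos1997theory}.
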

\begin{proof}
    Following Lemma \ref{lemma: quadratic from book}, $x(s, \mu)$ is primal feasible because $\delta(s, \mu)\leq 0.5<1$.
    Then, we prove $s^+ = s + \Delta s + \mathcal{E}_{\Delta s}^C$ is dual feasible. Notice that
    \begin{equation*}
        \begin{aligned}
            s^{-1}s^+ &= e + s^{-1}(\Delta s + \mathcal{E}_{\Delta s}^C).
        \end{aligned}
    \end{equation*}
    For any $i\in \{1,\dots, n\}$, since $\delta(s, \mu)\leq 0.5$, we have
    \begin{equation*}
        \left(s^{-1} \Delta s \right)_i \geq -\delta(s, \mu) \geq -0.5.
    \end{equation*}
    Similarly, since $\left\|s^{-1} \mathcal{E}_{\Delta s}^C\right\|_{2}\leq \frac{1}{3} \delta(s, \mu)^2$, we have
    \begin{equation*}
        \left(s^{-1} \mathcal{E}_{\Delta s}^C\right)_{i}\geq -\frac{1}{3} \delta(s, \mu)^2 \geq -\frac{1}{3}\times (0.5)^2.
    \end{equation*}
    It follows that
    \begin{equation*}
        s^{-1}s^+ \geq e - (0.5)e - \frac{1}{3}\times(0.5)^2 e \simeq 0.417e > 0,
    \end{equation*}
    and thus $s^+$ is dual feasible.
    Finally, we prove the local quadratic convergence of the iterative sequence to the central path. Following the proof of Theorem II.20 in \cite{roos1997theory}, we have
    \begin{equation*}
        \begin{aligned}
            \delta(s^+, \mu) \leq \frac{1}{\mu}\left\|\mu e - s^+ x(s, \mu)  \right\|_{2},
        \end{aligned}
    \end{equation*}
    and 
    \begin{equation*}
        \mu s^{-1} \Delta s = \mu e - s x(s, \mu).
    \end{equation*}
    Combining with the definition of $s^+$, we have
    \begin{equation*}
        \begin{aligned}
            \mu e - s^+ x(s, \mu) &= \mu e - (s + \Delta s + \mathcal{E}_{\Delta s}^C)x(s, \mu)\\
            &= \mu\left( s^{-1} \Delta s \right)^2 - \mathcal{E}_{\Delta s}^C x(s, \mu)\\            &= \mu\left( s^{-1} \Delta s \right)^2 + \left(\mu s^{-1}\Delta s - \mu e  \right)s^{-1}\mathcal{E}_{\Delta s}^C .
        \end{aligned}
    \end{equation*}
    By the definition of $\delta(s^+, \mu)$, we have
    \begin{equation*}
        \begin{aligned}
            \delta(s^+, \mu) &\leq \frac{1}{\mu}\left\|\mu e - s^+ x(s, \mu)  \right\|_{2}\\
            &\leq \delta(s, \mu)^2 + \left\|  \left(s^{-1}\Delta s -  e  \right)s^{-1}\mathcal{E}_{\Delta s}^C \right\|_{2},
        \end{aligned}
    \end{equation*}
    where the second inequality follows the triangular inequality.
    For the second term, we further relax it by
    \begin{equation*}
        \begin{aligned}
            \left\| \left(s^{-1}\Delta s -  e  \right)s^{-1}\mathcal{E}_{\Delta s}^C \right\|_{2} 
            % &\leq \left\|  \left(s^{-1}\Delta s -  e  \right)s^{-1}\mathcal{E}_{\Delta s}^C \right\|_{1}\\
            &\leq \left\| \left(s^{-1}\Delta s -  e  \right)\right\|_{\infty} \left\|s^{-1}\mathcal{E}_{\Delta s}^C \right\|_{2}.
        \end{aligned}
    \end{equation*}
    Since $\delta(s, \mu)\leq \frac{1}{2}$, we have
    \begin{equation*}
        \begin{aligned}
            \left\|  \left(s^{-1}\Delta s -  e  \right)\right\|_{\infty} \leq 1.5.
        \end{aligned}
    \end{equation*}
    Thus,
    \begin{equation*}
        \delta(s^+, \mu) \leq \delta(s, \mu)^2 + 1.5 \left\|s^{-1}\mathcal{E}_{\Delta s}^C \right\|_{2} \leq 1.5\delta(s, \mu)^2,
    \end{equation*}
    which completes the proof.
\end{proof}

\begin{lemma}
    If \textcolor{black}{$\left\|s^{-1} \mathcal{E}_{\Delta s}^C \right\|_{2}\leq 0.1\delta(s, \mu)$}, and $\delta(s, \mu) \leq 0.5$, let $s^+ = s + \Delta s + \mathcal{E}_{\Delta s}^C $ and $\mu^+ = (1-\theta)\mu$. Then we have 
    \begin{equation*}
        \delta(s^+, \mu^+)^2 \leq  (1+\rho^2)\delta(s, \mu)^4 + 0.06(1+\rho^2) + \rho^2n,
    \end{equation*}
    where $\rho= \frac{\theta}{1-\theta}.$
\end{lemma}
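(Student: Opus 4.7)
The plan is to exploit Theorem~\ref{theorem: quadratic} (which applies since $\delta(s,\mu)\le 0.5$) to conclude that $x(s,\mu)$ is primal feasible, making it an admissible probe for the minimum defining $\delta(s^+,\mu^+)$:
\begin{equation*}
\delta(s^+,\mu^+)^2 \;\le\; \frac{1}{(\mu^+)^2}\bigl\|\mu^+ e - s^+ x(s,\mu)\bigr\|_2^2.
\end{equation*}
I would then decompose $\mu^+ e - s^+ x(s,\mu) = (\mu e - s^+ x(s,\mu)) - \theta\mu e$, isolating the ``Newton-step residual'' from the ``$\mu$-update perturbation'' of magnitude $\theta\mu\sqrt{n}$, and apply the weighted inequality $\|u-v\|_2^2\le(1+\rho^2)\|u\|_2^2+(1+\rho^{-2})\|v\|_2^2$. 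The algebraic identity $(1+\rho^{-2})\theta^2 = \theta^2 + (1-\theta)^2$ then, after division by $(\mu^+)^2=(1-\theta)^2\mu^2$, is what produces the $\rho^2 n$ contribution from the $\mu$-update term, while the Newton-step term picks up a $(1+\rho^2)$-type prefactor.

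For the Newton-step piece I would recycle the identity established inside the proof of Theorem~\ref{theorem: quadratic},
\begin{equation*}
\mu e - s^+ x(s,\mu) \;=\; \mu\,(s^{-1}\Delta s)^2 + (\mu s^{-1}\Delta s - \mu e)\,s^{-1}\mathcal{E}_{\Delta s}^C,
\end{equation*}
together with $\|s^{-1}\Delta s\|_2=\delta(s,\mu)$, $\|s^{-1}\Delta s - e\|_\infty\le 1.5$, and the sharper residual hypothesis $\|s^{-1}\mathcal{E}_{\Delta s}^C\|_2\le 0.1\,\delta(s,\mu)$, to obtain $\|\mu e - s^+ x(s,\mu)\|_2 \le \mu(\delta^2 + 0.15\delta)$. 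Squaring and invoking $\delta\le 0.5$, the expansion $(\delta^2+0.15\delta)^2 = \delta^4 + 0.3\,\delta^3 + 0.0225\,\delta^2$ is bounded term-by-term (using $\delta^3\le 0.5\delta^2\le 0.125$ and $\delta^2\le 0.25$) by $\delta^4 + 0.06$, explaining the $(1+\rho^2)(\delta^4+0.06)$ summand in the target inequality.

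The main obstacle I anticipate is the careful bookkeeping of the cross term in the expansion of $\|(\mu e - s^+ x(s,\mu)) - \theta\mu e\|_2^2$: a symmetric application of Cauchy--Schwarz followed by AM--GM tends to inflate the coefficient of $n$. To recover the tight $\rho^2 n$ stated in the lemma I would exploit the componentwise identity $\mu e - (s+\Delta s)\,x(s,\mu) = (\mu e - s\,x(s,\mu))^2/\mu$, which forces $(s+\Delta s)^{T} x(s,\mu) = n\mu - \mu\,\delta(s,\mu)^2$, and hence fixes the sign of $\langle e, \mu e - s^+ x(s,\mu)\rangle$ up to an $\mathcal{E}_{\Delta s}^C$-dependent perturbation that is bounded by $\|s^{-1}\mathcal{E}_{\Delta s}^C\|_2\,\|sx(s,\mu)\|_2$ and can be absorbed into the $0.06(1+\rho^2)$ constant. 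Collecting the pieces and dividing by $(\mu^+)^2$ then yields the claimed bound.
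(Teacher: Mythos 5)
Your overall skeleton matches the paper's: use $x(s,\mu)$ as a feasible probe, write $\mu e - s^{+}x(s,\mu) = \mu h$ with $h = (s^{-1}\Delta s)^2 + (s^{-1}\Delta s - e)\,s^{-1}\mathcal{E}_{\Delta s}^C$, and bound $\|h\|^2 \le \delta^4 + 0.06$; your route via $\|h\| \le \delta^2 + 1.5\cdot 0.1\,\delta$ is fine and in fact slightly cleaner than the paper's H\"older-based expansion. The gap is in how you handle the $\mu$-update. First, the weighted inequality $\|u-v\|^2 \le (1+\rho^2)\|u\|^2 + (1+\rho^{-2})\|v\|^2$ with $v=\theta\mu e$ gives, after dividing by $(\mu^{+})^2=(1-\theta)^2\mu^2$, the coefficient $\bigl(\theta^2+(1-\theta)^2\bigr)/(1-\theta)^2 = 1+\rho^2$ on $n$, not $\rho^2$: an extra additive $n$ that destroys the bound (the downstream complexity theorem needs $\delta(s^{+},\mu^{+})^2\le 0.25$). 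You half-acknowledge this, but your repair does not close it: controlling $\langle e,\, \mu e - s^{+}x(s,\mu)\rangle$ via the identity $\mu e - (s+\Delta s)x(s,\mu) = \mu (s^{-1}\Delta s)^2$ and Cauchy--Schwarz on the perturbation yields only $e^T h \ge \delta^2 - \|s^{-1}\mathcal{E}_{\Delta s}^C\|_2\,\|e-s^{-1}\Delta s\|_2 \ge \delta^2 - 0.1\,\delta(\sqrt{n}+\delta)$, so $e^T h$ may still be negative of order $\delta\sqrt{n}$ and the cross term contributes up to $O(\rho\,\delta\sqrt{n})$. That cannot be absorbed into $0.06(1+\rho^2)$ for general $\theta$, and even for $\theta=1/(4\sqrt{n})$ absorbing it by Young's inequality consumes the $\rho^2 n$ budget a second time.

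The paper sidesteps both issues with an exact algebraic step in place of Young/Cauchy--Schwarz: since $s^{+}x(s,\mu)=\mu(e-h)$, one has $\tfrac{1}{\mu^{+}}\bigl(\mu^{+}e - s^{+}x(s,\mu)\bigr) = h - \rho(e-h)$, hence $\delta(s^{+},\mu^{+})^2 \le \|h\|^2 - 2\rho\, h^T(e-h) + \rho^2\|e-h\|^2$ with no slack. The two facts that then produce exactly the stated constants are $h^T(e-h)\ge 0$ (so the cross term is dropped, and also $e^Th\ge\|h\|^2\ge 0$ so that $\|e-h\|^2\le n+\|h\|^2$) and $\|h\|^2\le \delta^4+0.06$. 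The first is proved by a componentwise case analysis of $h(e-h)$ using $h\le e$, $|(s^{-1}\Delta s)_i|\le 0.5$, and $\|(s^{-1}\Delta s - e)s^{-1}\mathcal{E}_{\Delta s}^C\|_\infty\le 0.075$. This componentwise sign argument is the ingredient your proposal is missing; a norm bound on the inner product $e^Th$ alone is too weak to recover the $\rho^2 n$ term.
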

\begin{proof}
    Following the proof of Lemma II.26 of \cite{roos1997theory}, we have
    \begin{equation*}
        \delta(s^+, \mu^+)^2 \leq \|h\|^2 - 2\rho h^T(e-h) + \rho^2 \|e-h\|^2,
    \end{equation*}
    where
    \begin{equation*}
        \begin{aligned}
            h &= \left( s^{-1} \Delta s \right)^2 + \left( s^{-1}\Delta s -  e  \right)s^{-1}\mathcal{E}_{\Delta s}^C .
        \end{aligned}
    \end{equation*}
    Since $\left\|s^{-1} \mathcal{E}_{\Delta s}^C \right\|_{2}\leq \delta(s, \mu)\leq 0.5$, we have
    \begin{equation*}
        \begin{aligned}
            h &\leq 0.5^2 e + (-0.5 - 1)(-0.5) e = e.
        \end{aligned}
    \end{equation*}
    For
    \begin{equation*}
        \begin{aligned}
            h^T(e-h) &= \left( e - \left( s^{-1} \Delta s \right)^2 - \left( s^{-1}\Delta s -  e  \right)s^{-1}\mathcal{E}_{\Delta s}^C \right)^T \left( \left( s^{-1} \Delta s \right)^2+ \left( s^{-1}\Delta s -  e  \right) s^{-1}\mathcal{E}_{\Delta s}^C \right),
        \end{aligned}
    \end{equation*}
notice that
\begin{equation}
    \begin{aligned}
        &\left( e - \left( s^{-1} \Delta s \right)^2 - \left( s^{-1}\Delta s -  e  \right)s^{-1}\mathcal{E}_{\Delta s}^C \right)^T \left( s^{-1} \Delta s \right)^2 \\
        \geq &\left( 1 - \left( 0.5\right)^2 - (-0.5-1)(-0.1\times 0.5)\right)e^T \left( 0.5 \right)^2e\\
        \geq &0.16875n.
    \end{aligned}
\end{equation}
Also, if $\left( \left( s^{-1}\Delta s -  e  \right) s^{-1}\mathcal{E}_{\Delta s}^C \right)_i\geq 0$, then we have
\begin{equation*}
    \begin{aligned}
        &\left( e - \left( s^{-1} \Delta s \right)^2 - \left( s^{-1}\Delta s -  e  \right)s^{-1}\mathcal{E}_{\Delta s}^C \right)_i \left( \left( s^{-1}\Delta s -  e  \right) s^{-1}\mathcal{E}_{\Delta s}^C \right)_i\\
        \geq & \left( 1 - 0.5^2 - \left( -0.5 -  1  \right)(-0.1\times0.5)\right) \left( \left( s^{-1}\Delta s -  e  \right) s^{-1}\mathcal{E}_{\Delta s}^C \right)_i\\
        \geq &\  0.
    \end{aligned}
\end{equation*}
If $\left( \left( s^{-1}\Delta s -  e  \right) s^{-1}\mathcal{E}_{\Delta s}^C \right)_i < 0$, then we have
\begin{equation*}
    \begin{aligned}
        &\left( e - \left( s^{-1} \Delta s \right)^2 - \left( s^{-1}\Delta s -  e  \right)s^{-1}\mathcal{E}_{\Delta s}^C \right)_i \left( \left( s^{-1}\Delta s -  e  \right) s^{-1}\mathcal{E}_{\Delta s}^C \right)_i\\
        \geq & ~(1 - 0 - (-0.5-1)\times(0.1\times0.5))(-0.5-1)(0.1\times0.5)\\
        \geq & -0.080625,
    \end{aligned}
\end{equation*}
which gives
\begin{equation*}
    \begin{aligned}
    &\left( e - \left( s^{-1} \Delta s \right)^2 - \left( s^{-1}\Delta s -  e  \right)s^{-1}\mathcal{E}_{\Delta s}^C \right)^T \left( \left( s^{-1}\Delta s -  e  \right) s^{-1}\mathcal{E}_{\Delta s}^C \right)\\
            &\geq -0.080625n.
    \end{aligned}
\end{equation*}
It follows that
\begin{equation*}
    \begin{aligned}
        h^T(e-h)\geq 0.
    \end{aligned}
\end{equation*}
    Moreover, we have
    \begin{equation*}
        \begin{aligned}
        \|h\|^2 & = \left\| \left( s^{-1} \Delta s \right)\right\|^4 + \left\| 2 \left(s^{-1} \Delta s\right)^2 \left( s^{-1}\Delta s -  e  \right)s^{-1}\mathcal{E}_{\Delta s}^C  + \left( s^{-1}\Delta s -  e  \right)^2s^{-2}(\mathcal{E}_{\Delta s}^C )^2  \right\|  \\
        &\leq \left\| \left( s^{-1} \Delta s \right)\right\|^4 + \left\| 2 \left(s^{-1} \Delta s\right)^2 \left( s^{-1}\Delta s -  e  \right)s^{-1}\mathcal{E}_{\Delta s}^C  + \left( s^{-1}\Delta s -  e  \right)^2s^{-2}(\mathcal{E}_{\Delta s}^C )^2  \right\|_1 \\
        &\leq \left\| \left( s^{-1} \Delta s \right)\right\|^4 + \left\| 2 \left(s^{-1} \Delta s\right)^2  + \left( s^{-1}\Delta s -  e  \right) s^{-1}\mathcal{E}_{\Delta s}^C   \right\| \left\| \left( s^{-1}\Delta s -  e  \right)s^{-1}\mathcal{E}_{\Delta s}^C \right\|,
    \end{aligned}
    \end{equation*}
where the second inequality follows the H\"older's inequality. Given that
\begin{equation*}
    \begin{aligned}
        \left\|\left( s^{-1}\Delta s -  e  \right) s^{-1}\mathcal{E}_{\Delta s}^C   \right\| \leq& \left\| s^{-1}\Delta s s^{-1}\mathcal{E}_{\Delta s}^C  \right\| + \left\|  e s^{-1}\mathcal{E}_{\Delta s}^C   \right\|\\
        \leq & \left\| s^{-1}\Delta s  \right\|_{\infty} \left\| s^{-1}\mathcal{E}_{\Delta s}^C  \right\| + \left\| e\right\|_{\infty} \left\| s^{-1}\mathcal{E}_{\Delta s}^C  \right\|\\
        \leq & 2\left\| s^{-1}\mathcal{E}_{\Delta s}^C  \right\|,
    \end{aligned}
\end{equation*}
and
\begin{equation*}
    \begin{aligned}
        \left\| 2 \left(s^{-1} \Delta s\right)^2  + \left( s^{-1}\Delta s -  e  \right) s^{-1}\mathcal{E}_{\Delta s}^C   \right\|
        \leq& \left\| 2 \left(s^{-1} \Delta s\right)^2  \right\| +  \left\|\left( s^{-1}\Delta s -  e  \right) s^{-1}\mathcal{E}_{\Delta s}^C   \right\|\\
        \leq &\left\| 2 \left(s^{-1} \Delta s\right)^2  \right\| + 2\left\|  s^{-1}\mathcal{E}_{\Delta s}^C   \right\|\\
        \leq& 2\left\| \left(s^{-1} \Delta s\right)  \right\|^2 + 2\left\|  s^{-1}\mathcal{E}_{\Delta s}^C   \right\|,
    \end{aligned}
\end{equation*}
we have
\begin{align*}
    \|h\|^2 &\leq \left\| \left( s^{-1} \Delta s \right)\right\|^4 + \left( 2\left\| \left(s^{-1} \Delta s\right)  \right\|^2  +  2\left\| s^{-1}\mathcal{E}_{\Delta s}^C   \right\| \right) 2\left\|s^{-1}\mathcal{E}_{\Delta s}^C \right\| \\
    &\leq \delta(s, \mu)^4 + 0.06. 
\end{align*}
    Next, for $\|e - h\|_2^2$, we have
    \begin{equation*}
        \begin{aligned}
            \|e - h\|_2^2 &\leq \|e\|_2^2 + \|h\|_2^2\\
            &\leq n + \|h\|_2^2.
        \end{aligned}
    \end{equation*}
    Finally, for the proximity measure, we obtain
    \begin{equation*}
        \begin{aligned}
            \delta(s^+, \mu^+)^2 &\leq \|h\|^2 - 2\rho h^T(e-h) + \rho^2 \|e-h\|^2\\
            &\leq (1+\rho^2)\|h\|^2 + \rho^2n\\
            &\leq (1+\rho^2)[0.06+ \delta(s, \mu)^4] + \rho^2n,
        \end{aligned}
    \end{equation*}
    which shows the claimed bound.
\end{proof}

\begin{theorem}\label{theorem: polynomial complexity}
    If $\delta(s,\mu) \leq 0.5$, $\left\|s^{-1} \mathcal{E}_{\Delta s}^C \right\|_{2}\leq 0.1\delta(s, \mu)$, and $\theta= 1/(4\sqrt{n})$, then the inexact dual logarithmic barrier algorithm with full Newton steps requires at most 
    \begin{equation*}
        \left\lceil 4\sqrt{n} \log\left(\frac{n\mu^0}{\epsilon}\right) \right\rceil
    \end{equation*}
    iterations. The output is a primal-dual pair $(x,s)$ such that $x^T s \leq 2 \epsilon$.
\end{theorem}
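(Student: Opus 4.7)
The plan is to combine the one-step bound from the preceding lemma, which already controls $\delta(s^+,\mu^+)$ after a full inexact Newton step plus the $\mu$-update, with the classical geometric decrease of the barrier parameter, and then convert the terminal $\mu$-value into a duality-gap bound via Lemma \ref{lemma:duality bound}. Concretely I would split the argument into (i) a loop invariant $\delta(s^k,\mu^k)\le 1/2$, (ii) feasibility of each iterate, (iii) the iteration count, and (iv) the final accuracy bound.

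For the invariant I would argue by induction on $k$. The base case is the hypothesis. For the inductive step I plug $\theta=1/(4\sqrt{n})$ into the preceding lemma's bound $\delta(s^+,\mu^+)^2\le(1+\rho^2)\delta(s,\mu)^4+0.06(1+\rho^2)+\rho^2 n$ with $\rho=\theta/(1-\theta)=1/(4\sqrt{n}-1)$. A short calculation shows $\rho^2 n\le 1/9$ and $\rho^2\le 1/9$ (the binding case is $n=1$ rather than $n\to\infty$). Substituting $\delta(s^k,\mu^k)\le 1/2$ on the right-hand side then yields $\delta(s^{k+1},\mu^{k+1})^2<1/4$, closing the induction. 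This is precisely the step that forces the constant $4$ (compared with the $3$ of Lemma \ref{lemma: original IPM complexity}): the extra $0.06(1+\rho^2)$ term contributed by the residual $\mathcal{E}_{\Delta s}^C$ eats into the slack, so $\theta=1/(3\sqrt{n})$ would no longer suffice to keep the bound below $1/4$.

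Feasibility is then essentially free. Primal feasibility of $x(s^k,\mu^k)$ follows from Lemma \ref{lemma:feasiblity} (or Lemma \ref{lemma: quadratic from book}) once $\delta\le 1/2<1$, and dual feasibility of $s^{k+1}=s^k+\Delta s^k+\mathcal{E}_{\Delta s}^C$ follows entrywise from $(s^{-1}s^{k+1})_i\ge 1-\delta-\|s^{-1}\mathcal{E}_{\Delta s}^C\|_2\ge 1-0.5-0.05>0$, where I use $\delta\le 1/2$ together with the standing residual bound $\|s^{-1}\mathcal{E}_{\Delta s}^C\|_2\le 0.1\,\delta$.

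For the iteration count I would use the standard geometric argument: $\mu^k=(1-\theta)^k\mu^0$, and $-\log(1-\theta)\ge\theta$ gives $n\mu^k\le\epsilon$ as soon as $k\ge \log(n\mu^0/\epsilon)/\theta=4\sqrt{n}\log(n\mu^0/\epsilon)$, so at most $\lceil 4\sqrt{n}\log(n\mu^0/\epsilon)\rceil$ iterations are executed. Finally, Lemma \ref{lemma:duality bound} with $\delta\le 1/2$ converts this to $x(s,\mu)^{T}s\le(1+\delta)n\mu\le 1.5\,n\mu\le 1.5\epsilon<2\epsilon$. The only step that really requires care is the invariant calculation in Step (i): the numerical margin is tight (the bound saturates near $1/4$), so the constants $\rho^2 n\le 1/9$ and the $0.06$ coefficient must be tracked precisely, while the remaining steps are bookkeeping along the lines of the exact analysis in \cite{roos1997theory}.
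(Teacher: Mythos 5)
Your proposal is correct and follows essentially the same route as the paper: plug $\theta=1/(4\sqrt{n})$ into the preceding lemma, check $\rho^2 n\le 1/9$ so that $(1+\rho^2)(0.06+\delta^4)+\rho^2 n\le 0.25$ preserves the invariant $\delta\le 0.5$, then apply the standard geometric decrease of $\mu$ and Lemma~\ref{lemma:duality bound} for the final gap bound. Your explicit entrywise feasibility check and the remark on why the constant must grow from $3$ to $4$ are useful additions that the paper leaves implicit, but they do not change the argument.
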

\begin{proof}
    Let $\theta = \frac{1}{4\sqrt{n}}$, then we have
    \begin{equation*}
        \begin{aligned}
            \rho = \frac{\theta}{1-\theta} \leq \frac{1/4}{1 - 1/4}\frac{1}{\sqrt{n}} =\frac{1}{3\sqrt{n}}.
        \end{aligned}
    \end{equation*}
    With $\delta(s,\mu)\leq 0.5$, we have
    \begin{equation*}
        \begin{aligned}
            0.06+ \delta(s, \mu)^4 &\leq 0.1225,
        \end{aligned}
    \end{equation*}
    which gives
    \begin{equation*}
        \begin{aligned}
            \delta(s^+, \mu^+)^2 &\leq 0.1225 + \rho^2(0.1225 +n)\\
            &\leq 0.1225 + \frac{0.1225 +n}{9n}\\
            &\leq 0.25.
        \end{aligned}
    \end{equation*}
    Hence, after each iteration of the proposed inexact feasible dual logarithmic barrier algorithm, the property $\delta(s,\mu)\leq0.5$ holds.
    Let $n\mu_0(1-\theta)^K \geq (1-\theta)\epsilon$, one gets the iteration bound $K\leq \left\lceil 4\sqrt{n} \log\left({n\mu^0}/{\epsilon}\right) \right\rceil$. Finally, following Lemma~\ref{lemma:duality bound}, we obtain
    \begin{equation*}
        s^Tx(s,\mu) \leq n\mu (1+\delta(s,\mu))\leq 2n\mu\leq 2\epsilon.
    \end{equation*}
\end{proof}
% \vspace{1em}
% 
Now, we need to determine what accuracy is required to satisfy the aforementioned conditions.
Since we assume that we start with the two conditions, i.e., $\delta(s,\mu)\leq 0.5$ and $\left\|s^{-1} \mathcal{E}_{\Delta s}^C \right\|_{2}\leq 0.1\delta(s, \mu)$, satisfied, the condition $\delta(s, \mu) \leq 0.5$ automatically holds in the next iteration as proved in the proof of Theorem~\ref{theorem: polynomial complexity}. The remaining question is how to maintain $\left\|s^{-1} \mathcal{E}_{\Delta s}^C \right\|_{2}\leq  0.1\delta(s, \mu)$.
In the next section, we answer this question and analyze the per-iteration complexity.
\subsection{Per-iteration Complexity} \label{sec: per ite complexity}
In this section, we analyze the cost of each IPM iteration. Specifically, we analyze the number of queries to QRAM and the number of classical arithmetic operations. We start by analyzing the rescaling factor for the normalized solution from QLSA+QTA and then analyze the quantum accuracy needed for QLSA+QTA.
The analysis in this section holds for any iteration. For simplicity of notation, we omit the superscripts for iteration number.
Recall that the convergence conditions in Theorem~\ref{theorem: polynomial complexity} require $\|s^{-1} \mathcal{E}_{\Delta s}^C \|_2$ be bounded, so we try to find the proper rescaling factor to minimize the term. By the definition of $\mathcal{E}_{\Delta s}^C$, the term can be represented as
\begin{equation*}
    \begin{aligned}
        \|s^{-1} \mathcal{E}_{\Delta s}^C \|_2 &= \| S^{-1} A^T (\Delta y - \Delta \tilde{y}) \|_2 = \| S^{-1} A^T (\Delta y - \lambda \Delta\bar y /\|\Delta\bar y \|_2) \|_2,
    \end{aligned}
\end{equation*}
where $\lambda$ is the rescaling factor and $\Delta \bar y $ is a unit vector obtained from the quantum subroutine. Recall that $\Delta y$ is the exact solution (not necessarily a unit vector) and $\Delta \tilde{y}$ is an approximate solution rescaled from $\Delta \bar y $ with factor, we have the following relationships
\begin{equation*}
\begin{aligned}
\Delta \tilde{y} &= \lambda^* \Delta \bar{y},\\
\Delta y &= (\Delta\bar{y} - \mathcal{E}^Q_{\Delta y})\|\Delta y\|_2,\\
\Delta\bar{y} &= \Delta y/\|\Delta y\|_2 + \mathcal{E}_{\Delta y}^Q,
\end{aligned}
\end{equation*}
where $\lambda^*$ is the factor we use to rescale $\Delta \bar{y}$ from the quantum subroutine and is defined as the minimizer of $\|s^{-1}\mathcal{E}_{\Delta s}^C\|_2$ as a function of $\lambda$. It is obvious that
\begin{equation}\label{eq: lambda ast}
    \begin{aligned}
        \lambda^* = \frac{ (S^{-1}A^T\Delta y)^T ( S^{-1}A^T\Delta\bar  y) }{( S^{-1}A^T\Delta\bar  y)^T S^{-1}A^T\Delta\bar  y} = \frac{(A S^{-2} A^T \Delta y)^T \Delta \bar{y}}{\|S^{-1}A^T \Delta \bar{y}\|_2^2} = \frac{r_p^T\Delta \bar{y}}{\mu \|S^{-1}A^T \Delta \bar{y}\|_2^2}.
    \end{aligned}
\end{equation}
Notice that
\begin{equation*}
    \begin{aligned}
        \|s^{-1} \mathcal{E}_{\Delta s}^C \|_2 &= \| S^{-1} A^T \Delta y - 
        \frac{ (S^{-1}A^T\Delta y)^T ( S^{-1}A^T\Delta\bar  y) }{( S^{-1}A^T\Delta\bar  y)^T S^{-1}A^T\Delta\bar  y}
        S^{-1} A^T\Delta\bar  y  \|_2\\
        &= \|S^{-1}A^T\Delta y\|_2 \left\| \frac{ S^{-1}A^T\Delta y}{\| S^{-1}A^T\Delta y\|_2} - \frac{ (S^{-1}A^T\Delta y )^T  S^{-1}A^T\Delta\bar  y}{\| S^{-1}A^T\Delta y\|_2 \| S^{-1}A^T\Delta\bar  y\|_2} \frac{ S^{-1}A^T\Delta\bar  y}{\| S^{-1}A^T\Delta\bar  y\|_2} \right\|_2\\
        &= \delta(s,\mu) \sin<S^{-1}A^T\Delta y, S^{-1}A^T\Delta\bar  y>,
    \end{aligned}
\end{equation*}
the convergence condition $\|s^{-1}\mathcal{E}_{\Delta s}^C \|_2 \leq 0.1 \delta(s,\mu)$ can be guaranteed by
\begin{equation*}
    \begin{aligned}
        \sin \langle S^{-1}A^T\Delta y, S^{-1}A^T\Delta\bar  y \rangle \leq 0.1.
    \end{aligned}
\end{equation*}
We propose the following proposition, and we use the rest of the section to prove it.
\begin{proposition}\label{poroposition:accuracy}
    If $\|\mathcal{E}_{\Delta y}^Q\|_2\leq \frac{0.005}{1.995}\frac{1}{\sqrt{\kappa(AS^{-2}A^T)}}$, then $\sin \langle S^{-1}A^T\Delta y, S^{-1}A^T\Delta\bar  y \rangle \leq 0.1.$
\end{proposition}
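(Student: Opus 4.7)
The plan is to translate the target inequality $\sin\angle(S^{-1}A^T\Delta y,\, S^{-1}A^T\Delta\bar y) \le 0.1$ into a bound on $\|\mathcal{E}_{\Delta y}^Q\|_2$ by a standard spectral perturbation argument. First I would set $M := S^{-1}A^T$ and record the clean identity $\sqrt{\kappa(AS^{-2}A^T)} = \sqrt{\kappa(M^TM)} = \kappa(M)$, which is what will convert the square root in the hypothesis into an ordinary ratio of singular values. The two vectors whose angle we care about are $u_i := Mv_i$ with $v_1 := \Delta y/\|\Delta y\|_2$ and $v_2 := \Delta\bar y$, both unit vectors in $\mathbb{R}^m$, and whose difference is exactly $\mathcal{E}_{\Delta y}^Q$ by the definition recorded just before the statement.

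Next, I would invoke the elementary geometric bound $\sin\angle(u_1,u_2) \le \|u_1 - u_2\|_2 / \|u_1\|_2$, which follows from the fact that $\sin\theta$ equals the distance from $u_1$ to $\mathrm{span}(u_2)$ divided by $\|u_1\|_2$, combined with the trivial upper bound $\|u_1 - u_2\|_2$ on that distance. The numerator is controlled by $\|M(v_1-v_2)\|_2 \le \sigma_{\max}(M)\,\|\mathcal{E}_{\Delta y}^Q\|_2$, and the denominator by $\|Mv_1\|_2 \ge \sigma_{\min}(M)\,\|v_1\|_2 = \sigma_{\min}(M)$, valid because $v_1$ is a unit vector in $\mathbb{R}^m$ on which $M^TM$ acts as a positive definite form. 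Combining yields
\[
\sin\angle(u_1,u_2) \;\le\; \kappa(M)\,\|\mathcal{E}_{\Delta y}^Q\|_2 \;=\; \sqrt{\kappa(AS^{-2}A^T)}\,\|\mathcal{E}_{\Delta y}^Q\|_2,
\]
and substituting the hypothesized bound gives $\sin\theta \le 0.005/1.995 < 0.1$. The unusual constant $0.005/1.995$ most likely arises from a slightly refined version of the denominator estimate in which $\|u_2\|_2$ is lower-bounded via the reverse triangle inequality as $\|u_1\|_2 - \|u_1-u_2\|_2$, producing a $x/(1-x)$ style expression whose algebra naturally yields the $1.995$ in the denominator; alternatively, the slack absorbs a minor norm discrepancy between the tomography output $\Delta\bar y$ and the true unit direction it approximates.

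The main obstacle I expect is bookkeeping rather than anything conceptual: one must verify that only a single power of the condition number $\kappa(M)$ appears, rather than $\kappa(M^TM) = \kappa(M)^2$ which would occur if the argument were phrased in terms of the quadratic form $v^TM^TMv$ without first splitting into the spectral norm of $M$ in the numerator and the smallest singular value of $M$ in the denominator. Getting this right is exactly what justifies the $\sqrt{\kappa(AS^{-2}A^T)}$ in the statement, as opposed to the far weaker $\kappa(AS^{-2}A^T)$. Once this factor is tracked correctly, the remainder of the argument is a numerical check that the hypothesized upper bound on $\|\mathcal{E}_{\Delta y}^Q\|_2$ suffices to push $\sin\theta$ below the threshold $0.1$.
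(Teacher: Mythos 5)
Your argument is correct, but it is a genuinely different route from the paper's. The paper works with the cosine: it expands $\cos\psi$ using $\Delta\bar y = \Delta y/\|\Delta y\|_2 + \mathcal{E}_{\Delta y}^Q$, proves a dedicated auxiliary lemma (on $u^TM(\gamma_1 v)+u^TMu\ge 0$ for positive definite $M$) to show the numerator stays nonnegative, and then enforces $\cos\psi\ge 0.995$, which translates into the condition $0.005\,\|\Delta y\|_{AS^{-2}A^T}^2 \ge 1.995\,\|\Delta y\|_2\|S^{-1}A^T\Delta y\|_2\|S^{-1}A^T\mathcal{E}_{\Delta y}^Q\|_2$ --- this is the true origin of the constant $0.005/1.995$ (so your guess about a reverse-triangle-inequality refinement is not where it comes from; it is simply the algebra of $(W-T)/(W+T)\ge 0.995$). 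Your route instead bounds the sine directly via $\sin\angle(u_1,u_2)\le \|u_1-u_2\|_2/\|u_1\|_2$ with $u_i = S^{-1}A^Tv_i$, and the extreme singular values of $S^{-1}A^T$ give $\sin\theta \le \kappa(S^{-1}A^T)\|\mathcal{E}_{\Delta y}^Q\|_2 = \sqrt{\kappa(AS^{-2}A^T)}\,\|\mathcal{E}_{\Delta y}^Q\|_2$; your identification $\sqrt{\kappa(M^TM)}=\kappa(M)$ is valid since $A$ has full row rank, so $S^{-1}A^T$ has full column rank and a positive least singular value. This is shorter, avoids the auxiliary lemma entirely, and in fact yields the much stronger conclusion $\sin\theta\le 0.005/1.995\approx 0.0025$, showing the paper's constant has considerable slack. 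The one point to make explicit is the degenerate case $\Delta y=0$ (where $v_1$ is undefined); the paper excludes it separately by noting it occurs only when $r_p=0$, and you should do the same.
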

We start the proof with the following lemma.
\begin{lemma}\label{lemma: a lemma}
    Let $u$ and $v$ be unit vectors in $\mathbb{R}^n$ and $M\in\mathbb{R}^{n\times n}$ be positive definite. Let $0\leq \gamma_1\leq \gamma_2/\sqrt{\kappa(M)}$ with $\gamma_2\in [0,1]$. Then
    \begin{equation*}
        u^T M (\gamma_1 v) + u^TMu \geq 0.
    \end{equation*}
\end{lemma}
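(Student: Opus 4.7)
The inequality to establish is $u^T M u + \gamma_1 u^T M v \geq 0$, and since $u^T M u \geq 0$ by positive definiteness, the only nontrivial case is when $u^T M v$ is negative. So I would reduce immediately to showing
\begin{equation*}
u^T M u \geq \gamma_1 \lvert u^T M v \rvert,
\end{equation*}
and then control $\lvert u^T M v \rvert$ via Cauchy--Schwarz applied to the inner product $\langle x, y\rangle_M := x^T M y$, which is a genuine inner product because $M$ is positive definite. This gives $\lvert u^T M v \rvert \leq \sqrt{u^T M u}\sqrt{v^T M v}$.

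Substituting this bound, it suffices to prove $u^T M u \geq \gamma_1 \sqrt{u^T M u}\sqrt{v^T M v}$, i.e.
\begin{equation*}
\sqrt{u^T M u} \geq \gamma_1 \sqrt{v^T M v}.
\end{equation*}
Here I would invoke the Rayleigh-quotient bounds: because $u$ and $v$ are unit vectors, $u^T M u \geq \sigma_0(M)$ and $v^T M v \leq \sigma_1(M)$, so the ratio $\sqrt{v^T M v}/\sqrt{u^T M u}$ is at most $\sqrt{\sigma_1(M)/\sigma_0(M)} = \sqrt{\kappa(M)}$. Combining with the hypothesis $\gamma_1 \leq \gamma_2/\sqrt{\kappa(M)}$ and $\gamma_2 \leq 1$ yields $\gamma_1 \sqrt{v^T M v} \leq \gamma_2 \sqrt{u^T M u} \leq \sqrt{u^T M u}$, closing the argument.

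There isn't really a ``hard part'' here: the statement is essentially a quantitative form of the $M$-Cauchy--Schwarz inequality calibrated to the condition number of $M$. The only mildly delicate point is to remember that the sign case split is needed so that $\gamma_1 u^T M v$ being potentially positive does not force any wasted work, and to keep track that $\sigma_0(M) > 0$ (so division by it is legitimate) precisely because $M$ is positive definite. Once those are noted, the chain Cauchy--Schwarz $\to$ Rayleigh bounds $\to$ hypothesis on $\gamma_1, \gamma_2, \kappa(M)$ resolves everything in a few lines.
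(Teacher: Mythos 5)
Your proof is correct and is essentially the paper's argument in streamlined form: the paper factors $M = M_L^T M_L$ and bounds $\tilde u^T\tilde v\gamma_1 \geq -\|\tilde u\|\sqrt{\sigma_1(M)}\gamma_1$ with $\|\tilde u\|\geq\sqrt{\sigma_0(M)}$, which is exactly your $M$-inner-product Cauchy--Schwarz plus the Rayleigh bounds, followed by a completing-the-square step that your direct chain of inequalities makes unnecessary. (As in the paper, the Cauchy--Schwarz step tacitly uses that $M$ is symmetric positive definite, which holds for the matrix $AS^{-2}A^T$ to which the lemma is applied.)
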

\begin{proof}
    As matrix $M$ is positive definite we factorize it as $M = M_L^TM_L$. Let $\tilde u = M_Lu$ and $\tilde v = M_Lv$.
    Then $u^T M (\gamma_1v) + u^TMu = \tilde u^T \tilde v \gamma_1 + \tilde u^T \tilde u.$
    Notice that
    \begin{equation*}
        \begin{aligned}
            \|\tilde v \gamma_1\|_2 = \|\tilde v\|_2 \gamma_1 \leq \sqrt{\sigma_1(M)}\gamma_1,
        \end{aligned}
    \end{equation*}
    where $\sigma_1(M)$  is the maximum singular value of $M$.
    It is obvious that
    \begin{equation*}
        \begin{aligned}
            \tilde u^T \tilde v \gamma_1 \geq \tilde u^T (\frac{-\tilde u}{\|\tilde u\|_2}) \sqrt{\sigma_1(M)}\gamma_1.
        \end{aligned}
    \end{equation*}
    It follows that
    \begin{equation*}
        \begin{aligned}
            u^T M (\gamma_1v) + u^TMu &\geq -\|\tilde u\|_2 \sqrt{\sigma_1(M)}\gamma_1 + \|\tilde u\|_2^2\\
            &= (\|\tilde u\|_2 - \frac{\sqrt{\sigma_1(M)}\gamma_1}{2})^2 - \frac{\sigma_1(M)\gamma_1^2}{4}.
        \end{aligned}
    \end{equation*}
    Notice that
    \begin{equation*}
        \begin{aligned}
            \|\tilde u\|_2 - \frac{\sqrt{\sigma_1(M)}\gamma_1}{2}&\geq \|\tilde u\|_2 - \frac{\sqrt{\sigma_1(M)}\gamma_2}{2\sqrt{\kappa(M)}}\\
            &=\|\tilde u\|_2 - \frac{\sqrt{\sigma_0(M)}\gamma_2}{2}\\
            &\geq \sqrt{\sigma_0(M)} - \frac{\sqrt{\sigma_0(M)}\gamma_2}{2}\\
            &= (1-\frac{\gamma_2}{2})\sqrt{\sigma_0(M)}\\
            &\geq 0,
        \end{aligned}
    \end{equation*}
    where $\sigma_0(M)$ is the minimum singular value of $M$.
    So we have
    \begin{equation*}
        \begin{aligned}
            u^TM(\gamma_1v) + u^TMu &\geq (1-\frac{\gamma_2}{2})^2\sigma_0(M) - \sigma_1(M)\gamma_1^2/4\\
            &\geq (1-\frac{\gamma_2}{2})^2\sigma_0(M) - \frac{\gamma_2^2}{4}\sigma_0(M)\\
            &= (1-\gamma_2)\sigma_0(M)\\
            &\geq 0.
        \end{aligned}
    \end{equation*}
The proof is complete.
\end{proof}
Denote the angle between $S^{-1}A^T \Delta y$ and $S^{-1}A^T\Delta \bar{y}$ by $\psi$.
By the definition of cosine, we have 
\begin{equation*}
    \begin{aligned}
        \cos \psi &= \frac{\Delta y^T A S^{-2}A^T\Delta \bar  y}{\|S^{-1} A^T\Delta y\|_2 \|S^{-1}  A^T\Delta \bar  y\|_2}\\
        &= \frac{\Delta y^T A S^{-2}A^T(\Delta y + \|\Delta y\|_2 \mathcal{E}_{\Delta y}^Q))}{\|S^{-1} A^T\Delta y\|_2 \|S^{-1}  A^T(\Delta y + \|\Delta y\|_2 \mathcal{E}_{\Delta y}^Q)\|_2}\\
        &= \frac{\|\Delta y\|_2\Delta y^T A S^{-2}A^T \mathcal{E}_{\Delta y}^Q + \Delta y^T A S^{-2}A^T\Delta y}{\|\Delta y\|_2\|S^{-1} A^T\Delta y\|_2 \|S^{-1}  A^T \mathcal{E}_{\Delta y}^Q\|_2 + \Delta y^T A S^{-2}A^T\Delta y}.
    \end{aligned}
\end{equation*}
The following inequality holds because $\|\mathcal{E}_{\Delta y}^Q\|_2\leq \frac{0.005}{1.995}\frac{1}{\sqrt{\kappa(AS^{-2}A^T)}}$ and Lemma~\ref{lemma: a lemma},
\begin{equation*}
    \begin{aligned}
        \|\Delta y\|_2\Delta y^T A S^{-2}A^T \mathcal{E}_{\Delta y}^Q + \Delta y^T A S^{-2}A^T\Delta y &= \|\Delta y\|_2^2 \left( \frac{\Delta y^T}{\|\Delta y\|_2}AS^{-2}A^T \mathcal{E}_{\Delta y}^Q + \frac{\Delta y^T}{\|\Delta y\|_2} AS^{-2}A^T \frac{\Delta y}{\|\Delta y\|_2}  \right)\geq 0.
    \end{aligned}
\end{equation*}
It is obvious that $\cos \langle S^{-1}A^T\Delta y, S^{-1}A^T\Delta\bar  y \rangle \geq 0.995$ implies $\sin \langle S^{-1}A^T\Delta y, S^{-1}A^T\Delta\bar  y \rangle \leq 0.1$.
To get $\cos \langle S^{-1}A^T\Delta y, S^{-1}A^T\Delta \bar  y \rangle \geq 0.995,$
we enforce \begin{equation*}
    \begin{aligned}
        0.005\|\Delta y\|_{AS^{-2}A^T}^2 &\geq 1.995 \|\Delta y\|_2 \|S^{-1} A^T \Delta y\|_2\|S^{-1} A^T \mathcal{E}_{\Delta y}^Q\|_2,
    \end{aligned}
\end{equation*}
which gives
\begin{equation*} 
    \begin{aligned}
        \cos \psi&= \frac{\|\Delta y\|_2\Delta y^T A S^{-2}A^T \mathcal{E}_{\Delta y}^Q + \|\Delta y\|_{AS^{-2}A^T}^2}{\|\Delta y\|_2\|S^{-1} A^T\Delta y\|_2 \|S^{-1}  A^T \mathcal{E}_{\Delta y}^Q\|_2 + \|\Delta y\|_{AS^{-2}A^T}^2}\\
        &\geq \frac{-\|\Delta y\|_2\|S^{-1} A^T\Delta y\|_2 \|S^{-1}  A^T \mathcal{E}_{\Delta y}^Q\|_2 + \|\Delta y\|_{AS^{-2}A^T}^2}{\|\Delta y\|_2\|S^{-1} A^T\Delta y\|_2 \|S^{-1}  A^T \mathcal{E}_{\Delta y}^Q\|_2 + \|\Delta y\|_{AS^{-2}A^T}^2}\\
        &\geq 0.995.
    \end{aligned}
\end{equation*}
Notice that $\|\Delta y\|_{AS^{-2}A^T} = 0$ happens only when $r_p=0$, in which case $\Delta y = 0$ and there is no need to use a linear system solver. Thus we only need to analyze the case where $\|\Delta y\|_{AS^{-2}A^T} > 0$. Then we only need to enforce
\begin{equation*}
    \begin{aligned}
        0.005\|\Delta y \|_{AS^{-2}A^T} \geq 1.995 \|\Delta y\|_2 \|S^{-1} A^T \mathcal{E}_{\Delta y}^Q\|_2,
    \end{aligned}
\end{equation*}
which can be guaranteed if
\begin{equation*}
    \begin{aligned}
        \|\mathcal{E}_{\Delta y}^Q\|_2 &\leq \frac{0.005}{1.995} \frac{\|S^{-1}A^T\Delta y\|_2}{\|S^{-1}A^T\|_2 \|\Delta y\|_2}\\
        &= \frac{0.005}{1.995} \sqrt{\frac{\Delta y^T A S^{-1} S^{-1} A^T \Delta y}{\|A S^{-2} A^T\|_2 \|\Delta y\|_2^2} }.
    \end{aligned}
\end{equation*}
By the definition of condition number, we have
\begin{equation*}
    \begin{aligned}
    \frac{0.005}{1.995} \frac{1}{\sqrt{\kappa({AS^{-2}A^T}))}} \leq \frac{0.005}{1.995} \sqrt{\frac{\Delta y^T A S^{-1} S^{-1} A^T \Delta y}{\|A S^{-2} A^T\|_2 \|\Delta y\|_2^2} }.
    \end{aligned}
\end{equation*}
So, the following condition guarantees the convergence condition $\|s^{-1}\mathcal{E}_{\Delta s}^C \|_2 \leq 0.1 \delta(s,\mu)$  in Theorem~\ref{theorem: polynomial complexity}:
\begin{equation*}
    \|\mathcal{E}_{\Delta y}^Q\|_2 \leq \frac{0.005}{1.995} \frac{1}{\sqrt{\kappa({AS^{-2}A^T})}}.
\end{equation*}
From the quantum accuracy, we are ready to discuss the complexity of solving Newton system using QLSA+QTA. Finally, to derive the main result, we also need the following lemma, which can be proved using singular value decomposition.
\begin{lemma}\label{lemma:cond}
    For any full row rank matrix $Q \in \Rmbb^{m \times n}$ and any symmetric positive definite matrix $\Psi \in \Rmbb^{n \times n}$, their condition number satisfies 
    \begin{equation*}
        \kappa(Q \Psi Q^T) = \Ocal(\kappa(\Psi)\kappa_Q^2).
    \end{equation*}
\end{lemma}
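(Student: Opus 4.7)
The plan is to bound the extreme singular values (equivalently, eigenvalues, since $Q\Psi Q^T$ is symmetric positive definite) of $Q \Psi Q^T$ separately from above and below, and then take the ratio. First I would observe that because $Q$ has full row rank and $\Psi$ is symmetric positive definite, the matrix $Q \Psi Q^T \in \mathbb{R}^{m \times m}$ is itself symmetric positive definite, so its condition number is the ratio of its largest to its smallest eigenvalue and these can be read off from the Rayleigh quotient $v^T Q \Psi Q^T v / \|v\|_2^2$ over nonzero $v \in \mathbb{R}^m$.

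Next I would substitute $w = Q^T v$ and rewrite the Rayleigh quotient as $w^T \Psi w / \|v\|_2^2$. Applying the standard bounds $\sigma_0(\Psi) \|w\|_2^2 \le w^T \Psi w \le \sigma_1(\Psi) \|w\|_2^2$ and the singular value bounds $\sigma_0(Q)^2 \|v\|_2^2 \le \|w\|_2^2 = v^T Q Q^T v \le \sigma_1(Q)^2 \|v\|_2^2$ (using the SVD of $Q$, where $\sigma_0(Q) > 0$ precisely because $Q$ has full row rank) yields
\begin{equation*}
\sigma_0(\Psi)\, \sigma_0(Q)^2 \;\le\; \frac{v^T Q \Psi Q^T v}{\|v\|_2^2} \;\le\; \sigma_1(\Psi)\, \sigma_1(Q)^2,
\end{equation*}
so $\sigma_1(Q\Psi Q^T) \le \sigma_1(\Psi)\sigma_1(Q)^2$ and $\sigma_0(Q\Psi Q^T) \ge \sigma_0(\Psi)\sigma_0(Q)^2$.

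Taking the ratio gives $\kappa(Q\Psi Q^T) \le \kappa(\Psi)\,\kappa(Q)^2 = \kappa(\Psi)\,\kappa_Q^2$, which is the claimed $\mathcal{O}(\kappa(\Psi)\kappa_Q^2)$ bound. There is no real obstacle here: the result is a clean sandwich argument, and the only subtle point is verifying that the lower bound $\sigma_0(\Psi)\sigma_0(Q)^2$ is strictly positive, which follows from $\Psi \succ 0$ and the full row rank assumption on $Q$. The singular value decomposition of $Q$ makes the bounds on $\|Q^T v\|_2$ transparent, which is why the lemma is attributed to an SVD argument.
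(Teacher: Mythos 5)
Your proof is correct, and it follows exactly the route the paper intends: the paper gives no explicit proof of this lemma, stating only that it ``can be proved using singular value decomposition,'' and your Rayleigh-quotient sandwich $\sigma_0(\Psi)\sigma_0(Q)^2 \leq v^T Q\Psi Q^T v/\|v\|_2^2 \leq \sigma_1(\Psi)\sigma_1(Q)^2$ is precisely that SVD argument, carried out completely. No gaps.
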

To apply Lemma \ref{lemma:cond} to $AS^{-2}A^T$, it is easy to see that $Q = A$, recalling the assumption that $A$ is full row rank. Further, $\Psi = S^{-2}$, is a symmetric positive definite matrix. Hence,
\begin{equation*}
    \kappa(AS^{-2} A^T) = \Ocal(\kappa(S^{-2})\kappa_A^2).
\end{equation*}
Further, let $ \omega = \max_{k}\|s^{(k)}\|_{\infty}$, where $s^{(k)}$ is the dual slack variable in the $k^{\rm th}$ iteration. According to \cite{roos1997theory}, we have 
$\kappa(S^{-2}) = \Ocal({\omega^4}/{\mu^2})$, which gives
\begin{equation*}
    \left\|\mathcal{E}_{\rm \Delta y}^Q\right\|_2 = \mathcal{O}\left( \frac{\mu}{\kappa_A \omega^2} \right).
\end{equation*}
\begin{theorem}\label{Subroutine_Complexity}
    Given the linear system \eqref{eq:nes}, using the QSVT from \cite{gilyen2019thesis} and the QTA from \cite{van2023quantum}, an $\epsilon$-approximate solution $\widetilde{\Delta y}$ can be obtained in at most 
    \begin{equation*}
    \widetilde{\Ocal}_{n,\|AS^{-1}\|_F,\kappa(AS^{-1}), \frac{1}{\epsilon}}(m\kappa^2(AS^{-1}))
    \end{equation*}
    queries to QRAM.
\end{theorem}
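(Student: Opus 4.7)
The plan is to run the standard QLSA$+$QTA pipeline on the normal equation system \eqref{eq:nes}: first, build an efficient block encoding of the positive-definite matrix $AS^{-2}A^T$; second, invoke the QSVT-based QLSA of \cite{gilyen2019qsvt,gilyen2019thesis} to obtain a quantum state encoding the unit-norm solution $\Delta y / \|\Delta y\|_2$; third, apply the QTA of \cite{van2023quantum} to extract a classical unit vector $\Delta \bar{y}$; and finally, rescale using the factor $\lambda^\ast$ from \eqref{eq: lambda ast} to produce $\widetilde{\Delta y}$. The error budgets of these three stages must be apportioned so that the composite error meets the target $\epsilon$.

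For the block encoding, I would first apply Lemma~\ref{lem:beQRAM} to the matrix $AS^{-1}$ stored in QRAM to obtain a $(\|AS^{-1}\|_F, O(\log n), \xi_1)$-block encoding in time polylogarithmic in $n/\xi_1$. Applying Lemma~\ref{prop:product} to this encoding and its adjoint then yields a $(\|AS^{-1}\|_F^2, O(\log n), \xi_2)$-block encoding of $AS^{-2}A^T = (AS^{-1})(AS^{-1})^T$ with $\xi_2 = O(\|AS^{-1}\|_F \xi_1)$. Because the nonzero singular values of $AS^{-2}A^T$ are exactly the squares of those of $AS^{-1}$, we have $\kappa(AS^{-2}A^T) = \kappa^2(AS^{-1})$. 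Feeding this block encoding, together with a QRAM-prepared state $\ket{r_p/\mu}$, into the QSVT-based QLSA produces, for any target $\epsilon_{QLSP} > 0$, a quantum state $\epsilon_{QLSP}$-close to $\ket{\Delta y}$ at a cost of
\[
\widetilde{\Ocal}\bigl(\|AS^{-1}\|_F^{2}\,\kappa^2(AS^{-1})\,\log(1/\epsilon_{QLSP})\bigr)
\]
queries to QRAM per invocation.

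Next, I would pass copies of this state to the QTA of \cite{van2023quantum}, which, for target $\ell_2$-accuracy $\epsilon_{QTA}$ in $\Rmbb^m$, requires $\widetilde{\Ocal}(m/\epsilon_{QTA})$ preparations, and hence the same number of QLSA invocations. Composing the two stages yields total QRAM query complexity
\[
\widetilde{\Ocal}\!\left(\frac{m\,\|AS^{-1}\|_F^{2}\,\kappa^2(AS^{-1})}{\epsilon_{QTA}}\log\frac{1}{\epsilon_{QLSP}}\right).
\]
Choosing $\epsilon_{QLSP} = \epsilon_{QTA} = \Theta(\epsilon)$ (and $\xi_1$ a small polynomial in $\epsilon/\|AS^{-1}\|_F$) ensures, via the triangle inequality, that the composed error of block-encoding, QLSP, and tomography stages is $\Ocal(\epsilon)$. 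Absorbing the polylogarithmic dependences on $\|AS^{-1}\|_F$, $\kappa(AS^{-1})$, $n$, and $1/\epsilon$ into the $\widetilde{\Ocal}$-subscript, and then rescaling by $\lambda^\ast$, produces the claimed $\widetilde{\Ocal}_{n,\|AS^{-1}\|_F,\kappa(AS^{-1}),1/\epsilon}\!\bigl(m\,\kappa^2(AS^{-1})\bigr)$ bound.

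The main obstacle is the error accounting. The block-encoding error $\xi_1$, the QSVT error $\epsilon_{QLSP}$, and the QTA error $\epsilon_{QTA}$ all compound, and any sloppy allocation can leak a polynomial factor of $\|AS^{-1}\|_F$ or $\kappa(AS^{-1})$ outside the polylog. The key observation that makes this work is that the QSVT cost depends only logarithmically on $1/\epsilon_{QLSP}$, whereas the tomography cost is linear in $m/\epsilon_{QTA}$; balancing them so that only $m\,\kappa^2(AS^{-1})$ is left outside the $\widetilde{\Ocal}$-subscript, while $\|AS^{-1}\|_F$ appears only through logarithms (via the QLSA's $\log(1/\epsilon_{QLSP})$ and the QRAM preparation overheads), is the quantitative content of the theorem.
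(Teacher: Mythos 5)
Your pipeline (block-encode, QSVT, tomography, rescale) matches the paper's in outline, but the final accounting does not close, and the step where you ``absorb'' the leftover factors into the $\widetilde{\Ocal}$-subscript is the gap. After composing your stages you arrive at $\widetilde{\Ocal}\bigl(m\,\|AS^{-1}\|_F^2\,\kappa^2(AS^{-1})/\epsilon_{QTA}\bigr)$; neither $\|AS^{-1}\|_F^2$ nor $1/\epsilon_{QTA}$ is polylogarithmic, so neither can be moved into the subscript, and your bound is polynomially worse than the claimed $m\,\kappa^2(AS^{-1})$. The paper avoids this in two ways you do not. First, it does not block-encode and invert the squared matrix $AS^{-2}A^T$: it rewrites \eqref{eq:nes} as $(AS^{-1})(AS^{-1})^T\Delta y = (AS^{-1})\tfrac{1}{\mu}(Sx_0-\mu e)$ and applies QSVT to (the pseudo-inverse of) $AS^{-1}$ itself, so the per-invocation cost is $\kappa(AS^{-1})\,\|AS^{-1}\|_F$ --- linear, not quadratic, in both quantities. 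Second, the residual linear factor of $\|AS^{-1}\|_F$ is reduced to polylogarithmic not by notation but by the iterative-refinement-for-linear-systems technique of \cite{mohammadisiahroudi2024quantum}; without invoking that result your $\|AS^{-1}\|_F^2$ genuinely remains.

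The remaining discrepancy is the $1/\epsilon$ from tomography. The paper fixes the tomography precision at the value actually required by the IPM analysis, $\epsilon_{\Delta y}^Q = \|\mathcal{E}_{\Delta y}^Q\|_2 = 1/\sqrt{\kappa(AS^{-2}A^T)} = \kappa^{-1}(AS^{-1})$ (Proposition~\ref{poroposition:accuracy}), so the tomography count $m\,\kappa(AS^{-1})/\epsilon_{\Delta y}^Q$ collapses to $m\,\kappa^2(AS^{-1})$. Your choice $\epsilon_{QTA}=\Theta(\epsilon)$ with $\epsilon$ free leaves an explicit $1/\epsilon$ outside the tilde, which contradicts the stated bound; and if you instead substitute the required $\epsilon=\kappa^{-1}(AS^{-1})$ into your expression, your per-call $\kappa^2(AS^{-1})$ inflates the total to $m\,\|AS^{-1}\|_F^2\,\kappa^3(AS^{-1})$. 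To repair the argument you need both fixes: work with $AS^{-1}$ rather than $AS^{-2}A^T$ inside the QLSA, and cite the iterative refinement of \cite{mohammadisiahroudi2024quantum} to demote $\|AS^{-1}\|_F$ to the polylog.
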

\begin{proof}
According to \cite{mohammadisiahroudi2024quantum}, since the normal equation can be rewritten as 
\begin{equation*}
    (AS^{-1})(AS^{-1})^T \Delta y = (AS^{-1})\frac{1}{\mu}\left(Sx_0 - \mu e \right),
\end{equation*}
the normal equation systems can be solved using quantum singular value transformation (QSVT)~\cite{gilyen2019thesis,gilyen2019qsvt} with the complexity of $\widetilde{\Ocal}_{n,\kappa(AS^{-1}),\frac{1}{\epsilon} }(\kappa(AS^{-1}) \|AS^{-1}\|_F)$. 
Furthermore, iterative refinement can mitigate the linear dependence on $\|AS^{-1}\|_F$ into polylogarithmic dependence~\cite{ mohammadisiahroudi2024quantum}.
It is worth mentioning that the iterative refinement algorithm in \cite{mohammadisiahroudi2024quantum} does not necessarily output a normalized solution. But we can use the method of \cite{mohammadisiahroudi2024quantum} to compute a solution with accuracy ${\rm poly}(\epsilon, 1/\|AS^{-1}\|_F, 1/\kappa(AS^{-1}))$ and normalize it, which only contributes ${\rm polylog}(\|AS^{-1}\|_F, \kappa(AS^{-1}))$ overhead and gives an $\epsilon$-accurate normalized solution.
Next, to extract the classical solution, we use the quantum tomography algorithm (QTA) of \cite{van2023quantum} with accuracy $\epsilon_{\Delta y}^Q$, where $\epsilon_{\Delta y}^Q = \left\|\mathcal{E}_{\Delta y}^Q\right\|_2$. Thus, solving the \eqref{eq:nes} with QSVT and QTA to the desired accuracy requires 
\begin{equation*}
    \widetilde{\Ocal}_{n,\|AS^{-1}\|_F, \kappa(AS^{-1}),\frac{1}{\epsilon_{\Delta y}^Q} } \left( \frac{m \kappa(AS^{-1})}{\epsilon_{\Delta y}^Q} \right)
\end{equation*}
queries to QRAM. Given that $\epsilon_{\Delta y}^Q = \left\|\mathcal{E}_{\rm \Delta y}^Q\right\|_2  = 1/\sqrt{\kappa(AS^{-2}A^T)} = \kappa^{-1}(AS^{-1})$, our quantum complexity requires
\begin{equation*}
    \widetilde{\Ocal}_{n,\|AS^{-1}\|_F,\kappa(AS^{-1}), \frac{1}{\epsilon_{\Delta y}^Q}} \left( m \kappa^2(AS^{-1})\right)
\end{equation*}
queries to QRAM. The proof is complete.
\end{proof}

Now, we present the pseudocode of our inexact feasible quantum dual logarithmic barrier method in Algorithm~\ref{alg:qdlbm}, and provide the main theorem about the total complexity of Algorithm~\ref{alg:qdlbm}.
\begin{algorithm}[H]
\caption{IF-DQLBM}
\label{alg:qdlbm}
\begin{algorithmic}
\State \textbf{Initialize}: Choose $\zeta > 0$, $\theta = \frac{1}{4\sqrt{n}}$, $\tau \leq 0.5$, $(y^0,s^0) \in \mathcal{D}^\circ$, and $\mu^0 > 0$ such that $\delta(s^0,\mu^0)\leq \tau$.
\State $k \gets 0$
\While{$n\mu > \zeta$}
\State $(M^k, v^k) \gets$ \textbf{Build} system $M^k = AS^{-2}A^T, v^k = \frac{1}{\mu^k} r_p^k$
\State ${\Delta \bar{y}^k} \gets $ \textbf{Solve} the $(M^k, v^k)$ using QSVT and QTA
\State ${\Delta \bar{s}^k} = -A^T \Delta y^k$
\State $(y^{k+1},s^{k+1}) \gets (y^{k},s^{k}) + \lambda^{*k}({\Delta \bar{y}^k},{\Delta \bar{s}^k})$ according to Eq.~\eqref{eq: lambda ast}
\State $\mu \gets (1-\theta) \mu$
\State $k \gets k+1$
\EndWhile
\State \Return $(y^{k},s^{k})$
\end{algorithmic}
\end{algorithm}

\begin{theorem}\label{theorem: main original}
   Given $\zeta > 0$, $\theta = \frac{1}{4\sqrt{n}}$, proximity parameter $\tau \leq 0.5$, initial solution $(y^0,s^0) \in \mathcal{D}^\circ$, and $\mu^0 > 0$ such that $\delta(s^0,\mu^0)\leq \tau$, Algorithm~\ref{alg:qdlbm} finds a $\zeta$-approximate solution using 
    \begin{equation*}
        \widetilde{\Ocal}_{n,\|AS^{-1}\|_F,\kappa(AS^{-1}), \mu^0,  \frac{1}{\zeta}}\left( m \sqrt{n} \kappa^2(AS^{-1})
        % \left(n^{1.5} \frac{\kappa_A^3 \|A\|_2^2 \omega^{8}}{\zeta^5}\right)
        \right)
    \end{equation*}
    queries to QRAM and 
    \begin{equation*}
        \Ocal\left( m {n}^{1.5} \log\left(\frac{n\mu^0}{\zeta}\right) \right)
    \end{equation*}
    classical arithmetic operations.
\end{theorem}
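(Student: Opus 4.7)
The plan is to combine the iteration complexity from Theorem~\ref{theorem: polynomial complexity} with the per-iteration quantum subroutine complexity from Theorem~\ref{Subroutine_Complexity}, and then separately account for the classical overhead of each iteration. First, I would set the target duality gap appropriately: by Theorem~\ref{theorem: polynomial complexity} with $\epsilon = \zeta/2$, after at most $\lceil 4\sqrt{n}\log(2n\mu^0/\zeta)\rceil$ outer iterations the resulting pair $(x(s,\mu),s)$ satisfies $x^T s \leq \zeta$, which is the desired $\zeta$-approximate solution. This step only works provided the inductive invariant $\delta(s^k,\mu^k)\leq \tau \leq 0.5$ is preserved from iteration to iteration, which in turn relies on the inexactness bound $\|s^{-1}\mathcal{E}_{\Delta s}^C\|_2 \leq 0.1\,\delta(s,\mu)$ being enforced at every step.

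Next, I would verify that Algorithm~\ref{alg:qdlbm} actually enforces this bound. By Proposition~\ref{poroposition:accuracy} and the subsequent discussion, it suffices for the normalized quantum solution $\Delta\bar y^k$ returned by QSVT+QTA to satisfy $\|\mathcal{E}_{\Delta y}^Q\|_2 \leq \tfrac{0.005}{1.995}\,\kappa(AS^{-1})^{-1}$, and for $\lambda^{*k}$ to be chosen as in \eqref{eq: lambda ast}, because $\lambda^{*k}$ is precisely the minimizer of $\|s^{-1}\mathcal{E}_{\Delta s}^C\|_2$ over rescalings. Plugging this accuracy requirement into Theorem~\ref{Subroutine_Complexity} yields $\widetilde{\Ocal}_{n,\|AS^{-1}\|_F,\kappa(AS^{-1})}\!\left(m\kappa^2(AS^{-1})\right)$ QRAM queries per iteration. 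Multiplying by the $\Ocal(\sqrt{n}\log(n\mu^0/\zeta))$ iteration bound and absorbing $\log(n\mu^0/\zeta)$ into the poly-logarithmic subscripts gives the claimed quantum query complexity.

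For the classical cost per iteration, I would enumerate the arithmetic operations: forming $r_p^k = b - \mu^k A S^{-1} e$ costs $\Ocal(mn)$; updating the QRAM block-encoding of $AS^{-1}$ costs $\Ocal(mn)$ (up to poly-logarithmic factors suppressed in the subscripts); recovering $\Delta\bar s^k = -A^T \Delta\bar y^k$ costs $\Ocal(mn)$; evaluating $\lambda^{*k}$ via \eqref{eq: lambda ast} requires a constant number of $A$-vector products and inner products, again $\Ocal(mn)$; and the coordinate updates of $(y,s,\mu)$ are $\Ocal(n)$. Hence each outer iteration is $\Ocal(mn)$ classical work, and over $\Ocal(\sqrt{n}\log(n\mu^0/\zeta))$ iterations the classical total is $\Ocal(m n^{1.5}\log(n\mu^0/\zeta))$.

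The main subtlety I expect to have to argue carefully is the hand-off between the normalized quantum output $\Delta\bar y^k$ and the inexactness bound in the classical convergence analysis. Theorem~\ref{theorem: polynomial complexity} is stated with respect to $\mathcal{E}_{\Delta s}^C$, whereas QTA controls the error $\mathcal{E}_{\Delta y}^Q$ on the direction of a unit vector; bridging these requires exactly the rescaling identity for $\lambda^*$ and the $\sin\angle(\cdot,\cdot)$ reformulation used in Section~\ref{sec: per ite complexity}, together with Lemma~\ref{lemma:cond} to convert $\kappa(AS^{-2}A^T)$ into $\kappa^2(AS^{-1})$ so that the quantum accuracy requirement feeds cleanly into the Theorem~\ref{Subroutine_Complexity} bound. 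Once these pieces are lined up, the two complexity statements follow by straightforward multiplication.
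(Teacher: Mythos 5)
Your proposal is correct and follows essentially the same route as the paper's (much terser) proof: multiply the $\Ocal(\sqrt{n}\log(n\mu^0/\zeta))$ iteration bound of Theorem~\ref{theorem: polynomial complexity} by the per-iteration query cost of Theorem~\ref{Subroutine_Complexity} and the $\Ocal(mn)$ classical cost per iteration. The additional detail you supply on enforcing $\|s^{-1}\mathcal{E}_{\Delta s}^C\|_2 \leq 0.1\,\delta(s,\mu)$ via Proposition~\ref{poroposition:accuracy} and the $\lambda^{*}$ rescaling is exactly the bridge the paper establishes in Section~\ref{sec: per ite complexity} before invoking it implicitly here.
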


\begin{proof}
    We proved in Theorem~\ref{theorem: polynomial complexity} that our algorithm enjoys a $\Ocal \left(\sqrt{n} \log\left(\frac{n\mu^0}{\epsilon}\right)\right)$ iteration complexity. In each iteration, our subroutine solves the \eqref{eq:nes} system with complexity $\widetilde{\Ocal}_{n,\|AS^{-1}\|_F,\kappa(AS^{-1}), \frac{1}{\epsilon_{\Delta y}^Q}} \left( m \kappa^2(AS^{-1})\right)$, due to Theorem~\ref{Subroutine_Complexity}. On the classical side, each iteration incurs a cost of $\Ocal(mn)$ classical arithmetic operations.
\end{proof}

\subsection{Application of Iterative Refinement}
So far, we have developed and analyzed the inexact feasible dual logarithmic barrier algorithm that provides the solution $(y,s)$. 
Based on Theorem \ref{theorem: main original}, we have a polynomial dependence on $\kappa(AS^{-1})$, which is inherently dependent on $\kappa(A), \omega, \frac{1}{\zeta}$. 
The dependence of $\kappa(AS^{-1})$ on the inverse of precision translates into having exponential time complexity. To get an exact optimum, we need to get to precision $\zeta=2^{-\Ocal(L)}$, where $L$ is the binary length of input data defined as
\begin{equation*}
    L = m + n + mn + \sum_{i,j} \lceil\log(|a_{ij}|+1)\rceil + \sum_{i} \lceil\log(|c_{i}|+1)\rceil  + \sum_{j} \lceil\log(|b_{j}|+1)\rceil,  
\end{equation*}
and apply the rounding procedure.

To mitigate this dependence on precision, following the work of \cite{mohammadisiahroudi2023inexact}, we adopt the iterative refinement (IR) scheme. In \cite{mohammadisiahroudi2023inexact}, both primal and dual information are needed to run the iterative refinement scheme. Here, we propose an iterative refinement scheme that requires only dual information. It is worth mentioning that the idea of iterative refinement was first adopted for LO problems in the classical setting by \cite{gleixner2016iterative}. Also, similar approaches are also developed for semidefinite optimization \cite{mohammadisiahroudi2023quantum}.

For a feasible dual solution $(y,s)$ and a scaling factor $\nabla \geq 1$, we define the refining problem, at iteration $k$, as 
\begin{equation}\label{def: IR problem}\tag{$\Dcal_{\rm IR}$}
    \begin{aligned}
        \max_{\yhat,\shat} \ \nabla^{(k)} b^T \yhat\ \ & \\
        {\rm s.t. }\;\;
        A^T\yhat +&\shat = \nabla^{(k)} s^{(k)},\\
        &\shat \geq 0.
    \end{aligned}
\end{equation}
One can easily check that the dual of \eqref{def: IR problem} is as follows:
\begin{equation}\label{def: IR primal}\tag{$\mathcal{P}_{\rm IR}$}
    \begin{aligned}
        \min_{\xhat}\  \nabla^{(k)} {s^{(k)}}^T&\xhat, \\
        {\rm s.t. }\;\;
        A\xhat &= \nabla^{(k)} b, \\
        \xhat &\geq 0.
    \end{aligned}
\end{equation}
The associated proximity measure is defined as
\begin{equation*}
    \begin{aligned}
        \delta_{\rm IR}^{(k)}(\hat{s},\mu) = \frac{1}{\mu}\min_{\hat{x}} \{ \|\mu e - \hat{s}\hat{x}\|_2 : A\hat{x} = \nabla^{(k)}b \}.
    \end{aligned}
\end{equation*}
For the IR problem \eqref{def: IR problem}, we iteratively solve it using Algorithm \ref{alg:qdlbm} with precision $\hat{\zeta} = 10^{-2}$ until the complementarity gap reaches a desired accuracy $\zeta$. 
Once we solve the refining problem to precision $\hat{\zeta}$ at iteration $k$ to acquire $\yhat$, we update the current solution as 
\begin{align*}
    y^{(k+1)} &= y^{(k)} + \frac{1}{\nabla^{(k)}} \yhat, \\
    s^{(k+1)} &= c - A^{T}(y^{(k)} + \frac{1}{\nabla^{(k)}} \yhat) 
      =c-A^{T}y^{(k+1)}.
\end{align*}
We continue updating low-precision solutions until we reach a high-precision solution. The pseudocode of these steps is presented in Algorithm~\ref{alg:ir-if-qdlbm}.
\begin{algorithm}[H]
\caption{IR-IF-DQLBM}
\label{alg:ir-if-qdlbm}
\begin{algorithmic}
\State \textbf{Initialize}: Choose $0 < \zeta \ll \hat{\zeta}$, $(y^{(0)}, s^{(0)}) \in \mathcal{D}^\circ$, $\tau\leq0.5$, $\mu^{(0)} > 0$ such that $\delta(s^{(0)},\mu^{(0)})\leq \tau$, and $k=1$.
\State $(y^{(1)}, s^{(1)}) \gets$ Solve dual problem with $\hat{\zeta}$
\State $\nabla^{(0)} \gets 1$
% \State $k \gets 2$
\While{$\nabla^{(k-1)} < \frac{1}{\zeta}$}
\State $\nabla^{(k)} \gets \nabla^{(k-1)} \times \frac{1}{\hat{\zeta}}$
\State Construct the IR problem \eqref{def: IR problem}
\State $(\hat{y}, \hat{s}) \gets $ Solve \eqref{def: IR problem} using Algorithm~\ref{alg:qdlbm} to precision $\hat{\zeta}$
\State $y^{(k+1)} \gets y^{(k)} + \frac{1}{\nabla^{(k)}} \hat{y}$
\State $s^{(k+1)} \gets c - A^T y^{(k+1)}$
\State $k \gets k+1$
\EndWhile
\State \Return $(y^{(k)},s^{(k)})$

\end{algorithmic}
\end{algorithm}
Like any feasible IPM, as we solve the refining problem with a quantum interior point algorithm, we need a feasible initial solution for the refining problem in each iteration. The following lemma provides a feasible initial point for the refining problem at iteration $k$.
\begin{lemma}\label{lemma: IR feasibility}
    Given $(y^{(0)}, s^{(0)})$ an interior feasible solution for \eqref{LO: dual} and $(y^{(k)}, s^{(k)})$  the solution for \eqref{LO: dual} generated by the $(k-1)^{\rm th}$ iteration of Algorithm~\ref{alg:ir-if-qdlbm}. If $(y^{(k)}, s^{(k)})$ is interior feasible for \eqref{LO: dual}, then $(\nabla^{(k)} (y^{(0)} - y^{(k)}), \nabla^{(k)} s^{(0)})$ is an interior feasible solution for the refining problem \eqref{def: IR problem}. 
\end{lemma}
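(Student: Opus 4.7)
The plan is a direct verification: to show that the proposed pair $(\hat{y},\hat{s}) = (\nabla^{(k)}(y^{(0)} - y^{(k)}),\, \nabla^{(k)} s^{(0)})$ lies in the interior of the feasible region of \eqref{def: IR problem}, I need to check two things, namely (i) strict positivity of the proposed slack $\hat{s}$, and (ii) the linear equality constraint $A^T \hat y + \hat s = \nabla^{(k)} s^{(k)}$. Both follow from basic algebra combined with the standing assumption that $(y^{(0)},s^{(0)}) \in \mathcal{D}^\circ$ and the hypothesis that $(y^{(k)}, s^{(k)}) \in \mathcal{D}^\circ$.

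For (i), I would observe that $s^{(0)} > 0$ by the interior feasibility assumption on the initial point, and $\nabla^{(k)} \geq 1 > 0$ by construction in Algorithm~\ref{alg:ir-if-qdlbm} (since $\nabla^{(0)} = 1$ and at each iteration it is multiplied by $1/\hat{\zeta} > 1$), so $\hat{s} = \nabla^{(k)} s^{(0)} > 0$. For (ii), I would plug in the proposed values and rearrange:
\begin{equation*}
    A^T \hat{y} + \hat{s}
    = \nabla^{(k)}\bigl(A^T y^{(0)} + s^{(0)}\bigr) - \nabla^{(k)} A^T y^{(k)}
    = \nabla^{(k)} c - \nabla^{(k)} A^T y^{(k)}
    = \nabla^{(k)} s^{(k)},
\end{equation*}
where the second equality uses dual feasibility of $(y^{(0)},s^{(0)})$ and the last uses dual feasibility of $(y^{(k)}, s^{(k)})$, which is guaranteed by hypothesis.

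There is no real obstacle here; the argument is just a matter of bookkeeping the linear relations. The only subtle point worth mentioning explicitly is that feasibility of $(y^{(k)}, s^{(k)})$ for the original dual \eqref{LO: dual} is itself a nontrivial invariant of Algorithm~\ref{alg:ir-if-qdlbm}, because after the refinement update $y^{(k+1)} = y^{(k)} + \tfrac{1}{\nabla^{(k)}} \hat{y}$ the slack is recomputed as $s^{(k+1)} = c - A^T y^{(k+1)}$, which automatically satisfies the dual equality; strict positivity of $s^{(k+1)}$ then follows from $\hat{s} > 0$ together with the identity $s^{(k+1)} = \tfrac{1}{\nabla^{(k)}} \hat{s}$ obtained by combining the update rule with the constraint of \eqref{def: IR problem}. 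This justifies applying the lemma inductively across iterations of Algorithm~\ref{alg:ir-if-qdlbm}.
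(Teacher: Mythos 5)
Your proof is correct and follows essentially the same route as the paper: verify the equality constraint $A^T\hat{y} + \hat{s} = \nabla^{(k)} s^{(k)}$ by substituting and using dual feasibility of $(y^{(0)},s^{(0)})$ and $(y^{(k)},s^{(k)})$, then note $\nabla^{(k)} s^{(0)} > 0$. Your closing remark on why the hypothesis $(y^{(k)},s^{(k)}) \in \mathcal{D}^\circ$ is preserved across iterations is a useful observation (the paper defers this invariant to a later lemma), but it is not needed for the statement as given, which assumes interior feasibility of $(y^{(k)},s^{(k)})$ outright.
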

\begin{proof}
    First, we check the dual feasibility condition as follows.
    \begin{equation*}
        A^T (\nabla^{(k)} (y^{(0)} - y^{(k)})) + \nabla^{(k)} s^{(0)} = \nabla^{(k)} (A^T (y^{(0)} - y^{(k)}) + s^{(0)}) = \nabla^{(k)} (c- A^T y^{(k)}) = \nabla^{(k)} s^{(k)}.
    \end{equation*}
    Furthermore, it is clear that $\nabla^{(k)} s^{(0)} > 0$, hence, the solution is interior feasible for \eqref{def: IR problem}.
\end{proof}
The next lemma proves that the initial solution we choose for \eqref{def: IR problem} is close enough to its central path as required by Algorithm~\ref{alg:qdlbm}.
\begin{lemma}\label{lemma: IR initial delta}
In the $k^{\rm th}$ iteration of Algorithm~\ref{alg:ir-if-qdlbm}, let 
\begin{itemize}
    \item $(\nabla^{(k)}(y^{(0)}-y^{(k)}), \nabla^{(k)}s^{(0)})$ be the initial solution for \eqref{def: IR problem}, and
    \item $(\nabla^{(k)})^2\mu^{(0)}$ be the initial duality gap.
\end{itemize} 
Then, we have $\delta_{\rm IR}^{(k)}(\nabla^{(k)}s^{(0)}, (\nabla^{(k)})^2 \mu^{(0)})\leq 0.5$.
\end{lemma}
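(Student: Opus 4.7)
The plan is to reduce the claim to the initial-point assumption $\delta(s^{(0)},\mu^{(0)})\leq \tau \leq 0.5$ by a direct scaling argument, exploiting the homogeneity built into the refining problem \eqref{def: IR problem} and the accompanying primal \eqref{def: IR primal}.

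First, I would write out the definition
\begin{equation*}
\delta_{\rm IR}^{(k)}(\nabla^{(k)}s^{(0)}, (\nabla^{(k)})^2\mu^{(0)}) = \frac{1}{(\nabla^{(k)})^2\mu^{(0)}}\min_{\hat{x}}\left\{\|(\nabla^{(k)})^2\mu^{(0)}e - \nabla^{(k)}s^{(0)}\hat{x}\|_2 : A\hat{x} = \nabla^{(k)}b\right\}
\end{equation*}
and then perform the change of variables $\hat{x} = \nabla^{(k)} x$. The primal constraint $A\hat{x} = \nabla^{(k)}b$ becomes the original $Ax = b$, and the entry-wise term $\nabla^{(k)}s^{(0)}\hat{x}$ becomes $(\nabla^{(k)})^2 s^{(0)}x$, so a factor of $(\nabla^{(k)})^2$ can be factored out of the norm. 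After cancellation with the outer $1/((\nabla^{(k)})^2\mu^{(0)})$, the expression collapses to $\frac{1}{\mu^{(0)}}\min_x\{\|\mu^{(0)}e - s^{(0)}x\|_2 : Ax=b\}$, which is precisely $\delta(s^{(0)},\mu^{(0)})$.

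Invoking the hypothesis $\delta(s^{(0)},\mu^{(0)})\leq \tau \leq 0.5$ then yields the desired bound. There is essentially no obstacle here: the refining problem is constructed exactly so that this scaling works, and the fact that the $\mu$-parameter is chosen as $(\nabla^{(k)})^2\mu^{(0)}$ (with two powers of $\nabla^{(k)}$, matching one from the slack rescaling and one from the primal rescaling) is precisely what makes the cancellation exact. The only thing to double-check is that the change of variables is a bijection between primal feasible sets, which is immediate since $\nabla^{(k)}\geq 1 > 0$ preserves nonnegativity constraints (although note that the minimization here is unconstrained in sign, as in the definition of $\delta$, so nonnegativity of $\hat{x}$ is not actually required). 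The lemma then follows in a couple of lines.
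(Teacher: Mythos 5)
Your proof is correct and follows essentially the same route as the paper: a change of variables $\hat{x}=\nabla^{(k)}x$ showing that the scaled proximity measure collapses exactly to $\delta(s^{(0)},\mu^{(0)})\leq 0.5$. The only cosmetic difference is that the paper phrases the scaling in terms of the minimizer $x(s^{(0)},\mu^{(0)})$ (invoking its primal feasibility), whereas you work directly with the minimization over the affine set, which, as you correctly note, does not require nonnegativity.
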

\begin{proof}
According to Algorithm~\ref{alg:ir-if-qdlbm}, we have $\delta(s^{(0)},\mu^{(0)})\leq 0.5$. Following Theorem~\ref{theorem: quadratic}, $x(s^{(0)},\mu^{(0)})$ is primal feasible and thus $Ax(s^{(0)},\mu^{(0)}) = b$ with $x(s^{(0)},\mu^{(0)})\geq 0$.
Furthermore, it is easy to see that
\begin{equation*}
    \begin{aligned}
        \arg \min_{\hat{x}}\{ \|(\nabla^{(k)})^2\mu^{(0)} e - (\nabla^{(k)}s^{(0)}) \hat{x}\|_2 : A\hat{x} = \nabla^{(k)}b \} &= \arg \min_{\hat{x}}\{ \|\mu^{(0)} e - s^{(0)} \hat{x}/(\nabla^{(k)})\|_2 : A\hat{x}/(\nabla^{(k)}) = b \} \\
        &= (\nabla^{(k)})\arg \min_{{x}}\{ \|\mu^{(0)} e - s^{(0)} {x}\|_2 : A{x} = b \}\\
        &= (\nabla^{(k)}) x(s^{(0)},\mu^{(0)}).
    \end{aligned}
\end{equation*}
By the definition of $\delta_{\rm IR}^{(k)}(\nabla^{(k)}s^{(0)}, (\nabla^{(k)})^2 \mu^{(0)})$, we have
\begin{equation*}
    \begin{aligned}
        \delta_{\rm IR}^{(k)}(\nabla^{(k)}s^{(0)}, (\nabla^{(k)})^2 \mu^{(0)}) 
        &= \frac{1}{\mu^{(0)}}\|\mu^{(0)}e - s^{(0)}x(s^{(0)},\mu^{(0)})\|_2\\
        &=\delta(s^{(0)},\mu^{(0)})\leq 0.5.
    \end{aligned}
\end{equation*}
\end{proof}
Moreover, we prove that in each iteration of Algorithm~\ref{alg:ir-if-qdlbm} there exists a primal solution associated with the computed dual solution such that the duality gap is bounded. The result is as follows. 
\begin{lemma}
In the $k^{\rm th}$ iteration of Algorithm~\ref{alg:ir-if-qdlbm}, the solution $(y^{(k+1)}, s^{(k+1)})$ is dual feasible, and the proximity measure of the solution satisfies
\begin{equation*}
    \begin{aligned}
        \delta(s^{(k)}, (\hat{\zeta})^{2k-1}) \leq 0.5.
    \end{aligned}
\end{equation*}
\end{lemma}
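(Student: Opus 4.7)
The plan is to prove both assertions simultaneously by induction on $k$, treating the proximity bound on $s^{(k)}$ as the inductive hypothesis that makes the $k$-th invocation of Algorithm~\ref{alg:qdlbm} on \eqref{def: IR problem} well-posed. The base case $k=1$ corresponds to the initial solve of \eqref{LO: dual} with precision $\hat{\zeta}$: by Theorem~\ref{theorem: polynomial complexity}, the returned iterate $s^{(1)}$ satisfies $\delta(s^{(1)},\mu^{(1)}) \le 0.5$ at a $\mu^{(1)}$ of order $\hat{\zeta}$, matching the exponent $2k-1=1$.

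For the inductive step, I assume $\delta(s^{(k)},\hat{\zeta}^{2k-1}) \le 0.5$. Combining Lemma~\ref{lemma: IR feasibility} with Lemma~\ref{lemma: IR initial delta} furnishes a strictly feasible, well-centered starting point for \eqref{def: IR problem}, so Algorithm~\ref{alg:qdlbm} (via Theorem~\ref{theorem: polynomial complexity}) returns $(\hat{y},\hat{s})$ with $\hat{s}>0$ and $\delta_{\rm IR}^{(k)}(\hat{s},\hat{\mu}) \le 0.5$ for some $\hat{\mu}$ with $n\hat{\mu} \le \hat{\zeta}$. Dual feasibility of $(y^{(k+1)},s^{(k+1)})$ is then immediate: the equation $A^T y^{(k+1)} + s^{(k+1)} = c$ is automatic from the update rule, and substituting the refining constraint $A^T\hat{y}+\hat{s}=\nabla^{(k)} s^{(k)}$ into the update identifies $s^{(k+1)} = \hat{s}/\nabla^{(k)} > 0$.

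The proximity bound propagates by a scaling argument. If $\hat{x}$ attains the minimum defining $\delta_{\rm IR}^{(k)}(\hat{s},\hat{\mu})$, then $x^{(k+1)} := \hat{x}/\nabla^{(k)}$ is primal feasible for \eqref{LO: primal}, because $A\hat{x}=\nabla^{(k)} b$ rescales to $Ax^{(k+1)}=b$. Applying the homothety $(\hat{s},\hat{x},\hat{\mu}) \mapsto (\hat{s}/\nabla^{(k)},\hat{x}/\nabla^{(k)},\hat{\mu}/(\nabla^{(k)})^2)$ inside the minimum defining $\delta$, all factors of $\nabla^{(k)}$ cancel and one obtains $\delta(s^{(k+1)},\hat{\mu}/(\nabla^{(k)})^2) = \delta_{\rm IR}^{(k)}(\hat{s},\hat{\mu}) \le 0.5$. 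Since $\nabla^{(k)} = \hat{\zeta}^{-k}$ and $\hat{\mu} = O(\hat{\zeta})$, the effective $\mu$ at the end of iteration $k$ is of order $\hat{\zeta}^{2k+1} = \hat{\zeta}^{2(k+1)-1}$, which closes the induction.

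The main obstacle I anticipate is the careful bookkeeping of this scaling invariance, in particular verifying that the $\nabla^{(k)}$-homothety relates the constraint $A\hat{x} = \nabla^{(k)} b$ of the refined problem exactly to $Ax=b$ of the original and that every $\nabla^{(k)}$ factor cancels in the definition of $\delta$. Once this invariance is pinned down, the dual-feasibility half is a one-line substitution and the proximity half is a direct change of variables, so the induction carries through.
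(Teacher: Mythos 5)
Your proposal is correct and follows essentially the same route as the paper: induction on $k$, dual feasibility via $s^{(k+1)} = \hat{s}/\nabla^{(k)} = \hat{\zeta}^{k}\hat{s} > 0$, and the proximity bound via the $\nabla^{(k)}$-scaling that cancels inside the minimization defining $\delta$, yielding $\delta(s^{(k+1)},\hat{\zeta}^{2k+1}) = \delta_{\rm IR}^{(k)}(\hat{s},\hat{\zeta}) \le 0.5$. The only cosmetic difference is that you track a generic terminal $\hat{\mu}$ of the subroutine while the paper takes it to be exactly $\hat{\zeta}$; the underlying argument is identical.
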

\begin{proof}
We prove this lemma by induction.
\begin{itemize}
    \item First, in the $1^{\rm st}$ iteration, we have a $\hat{\zeta}$-approximate interior feasible solution $(y^{(1)}, s^{(1)})$ for \eqref{LO: dual} and a $\hat{\zeta}$-approximate interior feasible solution $(\hat{y}^{(1)}, \hat{s}^{(1)})$ for \eqref{def: IR problem}. 
    According to the proof of Theorem~\ref{theorem: polynomial complexity}, we have $\delta_{\rm IR}^{(1)}(\hat{s}^{(1)},\hat{\zeta})\leq 0.5$. 
    Thus, there exists a feasible solution $\hat{x}^{(1)}(\hat{s}^{(1)},\hat{\zeta})$ for \eqref{def: IR primal}, i.e., $A\hat{x}^{(1)}(\hat{s}^{(1)},\hat{\zeta}) = b/\hat{\zeta}$, because of Theorem~\ref{theorem: quadratic}. By the end of the $1^{\rm st}$ iteration, we have
    \begin{equation*}
        \begin{aligned}
            y^{(2)} &= y^{(1)} + \hat{\zeta} \hat{y}^{(1)}\\
            s^{(2)} &= c- A^T y^{(2)} = s^{(1)} - \hat{\zeta} A^T\hat{y}^{(1)} = \hat{\zeta}\hat{s}^{(1)}> 0,
        \end{aligned}
    \end{equation*}
    which is strictly dual feasible for \eqref{LO: dual}.
    Then, we have
    \begin{equation*}
        \begin{aligned}
            \delta(s^{(2)}, (\hat{\zeta})^3) & = \frac{1}{(\hat{\zeta})^3} \min_{x} \{ \| (\hat{\zeta})^3 e - s^{(2)} x\|_2 : Ax=b  \}\\
            &\leq \frac{1}{(\hat{\zeta})^3} \| (\hat{\zeta})^3 e - s^{(2)}\hat{\zeta}\hat{x}^{(1)}(\hat{s}^{(1)},\hat{\zeta})\|_2\\
            &= \frac{1}{(\hat{\zeta})^3} \| (\hat{\zeta})^3 e - (\hat{\zeta})^2 \hat{s}^{(1)} \hat{x}^{(1)}(\hat{s}^{(1)},\hat{\zeta})\|_2\\
            &= \delta_{\rm IR}^{(1)}(\hat{s}^{(1)}, \hat{\zeta}) \leq 0.5.
        \end{aligned}
    \end{equation*}
    \item Second, by the end of the $(k-1)^{\rm th}$ iteration, let us assume that we have a strictly dual feasible solution $(y^{(k)}, s^{(k)})$ for \eqref{LO: dual} and $\delta(s^{(k)}, (\hat{\zeta})^{2k-1}) \leq 0.5.$

    \item Then, in the $k^{\rm th}$ iteration, if Algorithm~\ref{alg:ir-if-qdlbm} does not terminate, then we solve \eqref{def: IR problem} and obtain a $\hat{\zeta}$-approximate strictly feasible solution $(\hat{y}^{(k)}, \hat{s}^{(k)})$ with $\delta_{\rm IR}^{(k)}(\hat{s}^{(k)}, \hat{\zeta})\leq 0.5$. 
    According to Theorem~\ref{theorem: quadratic}, there exists a feasible solution $\hat{x}^{(k)}(\hat{s}^{(k)}, \hat{\zeta})$ for \eqref{def: IR primal}.
    Then, we construct the solution
    \begin{equation*}
        \begin{aligned}
            y^{(k+1)} &= y^{(k)} + (\hat{\zeta})^k \hat{y}^{(k)}\\
            s^{(k+1)} &= c- A^T y^{(k+1)} = s^{(k)} - (\hat{\zeta})^k A^T\hat{y}^{(k)} = (\hat{\zeta})^k\hat{s}^{(k)}> 0,
        \end{aligned}
    \end{equation*}
    which is a strictly dual feasible solution for \eqref{LO: dual}.
    It follows that
    \begin{equation*}
        \begin{aligned}
            \delta(s^{(k+1)}, (\hat{\zeta})^{2k+1}) &= \frac{1}{(\hat{\zeta})^{2k+1}} \min_{x} \{ \| (\hat{\zeta})^{2k+1} e - s^{(k+1)} x\|_2 :Ax=b\}\\
            &\leq \frac{1}{(\hat{\zeta})^{2k+1}} \| (\hat{\zeta})^{2k+1} e - s^{(k+1)} (\hat{\zeta})^k \hat{x}^{(k)}(\hat{s}^{(k)}, \hat{\zeta})\|_2\\
            &=\frac{1}{(\hat{\zeta})^{2k+1}} \| (\hat{\zeta})^{2k+1} e - (\hat{\zeta})^k\hat{s}^{(k)}(\hat{\zeta})^k \hat{x}^{(k)}(\hat{s}^{(k)}, \hat{\zeta})\|_2\\
            &= \delta_{\rm IR}^{(k)}(\hat{s}^{(k)}, \hat{\zeta}) \leq 0.5,
        \end{aligned}
    \end{equation*}
    which completes the proof.
\end{itemize}
\end{proof}

Given that in the context of iterative refinement, we terminate the subroutine early, i.e. $\mu \leq \hat{\zeta} = 10^{-2}$, the condition number of the system in that iteration remains bounded by a constant. In other words, the condition number of the system is bounded by a constant times the initial condition number, i.e. 
\begin{equation*}
    \kappa{(A(S^{(k)})^{-2}A^T)} = \Ocal\left(\frac{\kappa^0}{\mu^{(k)}}\right) = \Ocal(\kappa^0).
\end{equation*}
Thus, we can conclude that all NES systems in iterations of Algorithm \ref{alg:ir-if-qdlbm} have the $\Ocal(\kappa^0)$ uniform condition number bound \cite{mohammadisiahroudi2023quantum}. Thus, it remains to discuss the bound on $\kappa^0$. According to Lemma~\ref{lemma:cond} and the discussion thereafter, $\kappa^0$ depends on $\omega^0 = \|s^0\|_\infty, \mu^0$, and $\kappa_A$. Further, note that $\kappa^0 = \kappa_{(A(S^{(0)})^{-1})}^2$, by construction.

The following lemma bounds the number of iterations required to solve the problem to the desired precision $\zeta$.
\begin{lemma}\label{th:irinquiry}
Algorithm~\ref{alg:ir-if-qdlbm} produces a $\zeta$-optimal solution with at most $\Ocal(\frac{\log(\zeta)}{\log(\hat{\zeta})})$ inquiries to Algorithm~\ref{alg:qdlbm} with precision $\hat{\zeta}$. 
\end{lemma}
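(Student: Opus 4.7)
The plan is to track the geometric growth of the scaling factor $\nabla^{(k)}$ and show that the loop must terminate after $\Ocal(\log(\zeta)/\log(\hat\zeta))$ iterations, and then verify that the solution returned at termination actually achieves duality gap at most $\zeta$ using the invariant established in the previous lemma.

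First, I would unroll the update rule $\nabla^{(k)} = \nabla^{(k-1)}/\hat\zeta$ with $\nabla^{(0)} = 1$ to obtain the closed-form $\nabla^{(k)} = (1/\hat\zeta)^k$. Since Algorithm~\ref{alg:ir-if-qdlbm} exits its while loop as soon as $\nabla^{(k-1)} \geq 1/\zeta$, taking logarithms on both sides of the corresponding inequality $(1/\hat\zeta)^{k-1} \geq 1/\zeta$ and rearranging gives $k-1 \geq \log(1/\zeta)/\log(1/\hat\zeta) = \log(\zeta)/\log(\hat\zeta)$. Hence the loop terminates after at most $\lceil \log(\zeta)/\log(\hat\zeta) \rceil + 1 = \Ocal(\log(\zeta)/\log(\hat\zeta))$ iterations, and each iteration calls Algorithm~\ref{alg:qdlbm} exactly once.

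Second, I would verify the output is truly $\zeta$-optimal. By the previous lemma, once the loop exits after $k-1$ inner calls, the iterate $(y^{(k)}, s^{(k)})$ is strictly dual feasible with $\delta(s^{(k)}, (\hat\zeta)^{2k-1}) \leq 0.5$. Invoking Lemma~\ref{lemma:duality bound} with $\mu = (\hat\zeta)^{2k-1}$, there is a primal feasible $x(s^{(k)}, (\hat\zeta)^{2k-1})$ whose duality gap with $s^{(k)}$ is bounded by $\tfrac{3}{2} n (\hat\zeta)^{2k-1}$. The termination condition $\nabla^{(k-1)} = (\hat\zeta)^{-(k-1)} \geq 1/\zeta$ translates to $(\hat\zeta)^{k-1} \leq \zeta$, so $(\hat\zeta)^{2k-1} \leq \hat\zeta \cdot \zeta^2$, which is smaller than $\zeta/n$ for $\zeta$ small enough (and otherwise absorbed into the constant of $\Ocal$); combined with the previous bound this delivers a duality gap on the order of $\zeta$.

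The main obstacle here is mostly bookkeeping rather than a deep argument: the geometric growth of $\nabla^{(k)}$ makes the iteration count immediate, and the hardest piece is making sure that the invariant $\delta(s^{(k)},(\hat\zeta)^{2k-1}) \leq 0.5$ proved in the previous lemma is correctly coupled with the termination threshold on $\nabla^{(k-1)}$ so that ``$\hat\zeta^{k-1}$ small enough'' indeed yields ``duality gap at most $\zeta$.'' No further calculation beyond these substitutions is needed.
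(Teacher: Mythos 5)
Your iteration-count argument is exactly the paper's proof: the paper simply unrolls $\nabla^{(K)}=(\hat{\zeta})^{-K}$ against the termination threshold $\nabla^{(K)}\geq \zeta^{-1}$ to get $K\geq \log(\zeta)/\log(\hat{\zeta})$, so your first paragraph matches it. Your second paragraph (coupling the invariant $\delta(s^{(k)},(\hat{\zeta})^{2k-1})\leq 0.5$ with Lemma~\ref{lemma:duality bound} to confirm the gap is $\Ocal(\zeta)$) goes beyond what the paper proves in this lemma, and is a correct and worthwhile supplement, modulo the mild caveat you already note about absorbing the factor $n\hat{\zeta}$ into the constant.
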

\begin{proof}
    According to Algorithm~\ref{alg:ir-if-qdlbm}, $\nabla^{(K)} = (\hat{\zeta})^{-K}$. If Algorithm~\ref{alg:ir-if-qdlbm} terminates after $K$ iterations, then 
    \begin{equation*}
        \begin{aligned}
            (\hat{\zeta})^{-K} \geq (\zeta)^{-1}  \Rightarrow K\geq \frac{\log(\zeta)}{\log(\hat{\zeta})}.
        \end{aligned}
    \end{equation*}
\end{proof}
Finally, by using iterative refinement, we obtain the complexity of Algorithm~\ref{alg:ir-if-qdlbm} as follows. 
\begin{theorem}
Algorithm~\ref{alg:ir-if-qdlbm} finds a $\zeta$-optimal solution using at most 
\begin{equation*}
        \widetilde{\Ocal}_{n, \|AS^{-1}\|_F, \kappa(AS^{-1}), \mu^0, \frac{1}{\zeta}}\left( m \sqrt{n} \kappa^0 
        \right)
    \end{equation*}    
    queries to QRAM and 
    \begin{equation*}
        \Ocal\left( m {n}^{1.5} \log\left(\frac{n\mu^0}{\zeta}\right) \right)
    \end{equation*}
    classical arithmetic operations.
\end{theorem}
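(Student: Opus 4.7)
The plan is to obtain the total complexity by composing two ingredients already assembled in the paper: the per-subroutine cost of Algorithm~\ref{alg:qdlbm} from Theorem~\ref{theorem: main original} applied at the constant precision $\hat{\zeta}=10^{-2}$, and the bound on the number of outer iterative-refinement calls from Lemma~\ref{th:irinquiry}. The key observation that makes this composition efficient (i.e., that gives $\kappa^0$ rather than a $\zeta$-dependent condition number in the leading factor) is that each refining subproblem is solved only to the fixed precision $\hat{\zeta}$, which keeps the barrier parameter $\mu$ bounded below by a constant, hence keeps the condition number of the NES uniformly bounded.

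First, I would check that each call in the outer loop is legitimate: at iteration $k$, Lemma~\ref{lemma: IR feasibility} guarantees that $(\nabla^{(k)}(y^{(0)}-y^{(k)}),\nabla^{(k)}s^{(0)})$ is a strictly feasible starting point for \eqref{def: IR problem}, and Lemma~\ref{lemma: IR initial delta} shows the associated proximity measure is at most $0.5$, matching the hypotheses required by Algorithm~\ref{alg:qdlbm}. Invoking Theorem~\ref{theorem: main original} with target precision $\hat{\zeta}$, a single call therefore costs $\widetilde{\Ocal}(m\sqrt{n}\kappa^2(AS^{-1}))$ QRAM queries and $\Ocal(mn^{1.5}\log(n\mu^0/\hat{\zeta}))$ arithmetic operations, since $\hat{\zeta}$ is an absolute constant so the $\log(n\mu^0/\hat{\zeta})$ factor becomes $\log(n\mu^0)$. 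Here the condition number argument preceding the lemma applies: because the subroutine terminates at $\mu^{(k)} = \Omega(\hat{\zeta}) = \Omega(1)$, one has $\kappa(A(S^{(k)})^{-2}A^T) = \Ocal(\kappa^0/\mu^{(k)}) = \Ocal(\kappa^0)$ uniformly over all refinement iterations and all inner IPM iterations within them, so the $\kappa^2(AS^{-1})$ appearing in Theorem~\ref{theorem: main original} can be replaced by $\kappa^0$.

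Second, Lemma~\ref{th:irinquiry} bounds the number of outer IR calls by $\Ocal(\log(1/\zeta)/\log(1/\hat{\zeta})) = \Ocal(\log(1/\zeta))$, since $\hat{\zeta}$ is a constant. Multiplying the per-call cost by this number of calls, and absorbing the $\log(1/\zeta)$ factor into the $\widetilde{\Ocal}$ (with $1/\zeta$ explicitly listed in the subscript), yields the claimed $\widetilde{\Ocal}_{n, \|AS^{-1}\|_F, \kappa(AS^{-1}), \mu^0, 1/\zeta}(m\sqrt{n}\kappa^0)$ query complexity. For the classical side, $\Ocal(mn^{1.5}\log(n\mu^0))$ operations per call times $\Ocal(\log(1/\zeta))$ outer calls combine into $\Ocal(mn^{1.5}\log(n\mu^0/\zeta))$. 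The main subtlety of the argument, and the reason the iterative-refinement layer is what produces the advertised speedup, is precisely the uniform condition-number bound: without early termination one would see $\kappa$ deteriorate like $1/\zeta$ and lose the polynomial dependence on $\log(1/\zeta)$; the rest of the proof is a routine multiplication of complexities.
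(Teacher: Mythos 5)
Your proposal is correct and follows essentially the same route as the paper, which proves this theorem implicitly by composing the per-call cost of Theorem~\ref{theorem: main original} at fixed precision $\hat{\zeta}$, the uniform $\Ocal(\kappa^0)$ condition-number bound from early termination, and the $\Ocal(\log(\zeta)/\log(\hat{\zeta}))$ outer-iteration count of Lemma~\ref{th:irinquiry}. The only (shared with the paper) imprecision is in the classical count, where multiplying $\log(n\mu^0)$ per call by $\log(1/\zeta)$ calls gives a product of logarithms rather than the stated $\log(n\mu^0/\zeta)$, but this does not affect the approach.
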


Our work can be compared to the work of Apers and Gribling~\cite{apers2023quantum} as we both present quantum interior point methods to solve linear optimization problems in dual form, assuming having access to QRAM. As shown in Table~\ref{tab:compelxities}, in~\cite{apers2023quantum}, the proposed algorithm makes $\widetilde{\Ocal}_{m,n, \frac{1}{\zeta}}(mn), \widetilde{\Ocal}_{m,n, \frac{1}{\zeta}}(m^{5/4} n^{3/4}), \widetilde{\Ocal}_{m,n, \frac{1}{\zeta}}(m^5 \sqrt{n})$ queries to the data depending on which of the logarithmic, volumetric, or Lewis weight barrier is used, respectively. It is worth mentioning that their algorithm is tailored to specific types of problems. These sorts of problems appear in different applications such as linear relaxation of discrete optimization problems, machine learning problems, Chebyshev approximation, and linear separability problems. However, to get a sublinear number of queries the structure of the coefficient matrix should be a tall-and-skinny sparse matrix, i.e., $n \geq \gamma m^{10}$, where $\gamma$ is some constant. In contrast, our algorithm does not require such a special structure and can provide a sublinear number of queries for problems with coefficient matrix having $n \geq \vartheta m^2$, for some constant $\vartheta$. Therefore, in worst-case complexity analysis, the algorithm of \cite{apers2023quantum} has a $\Ocal(n^{7.5})$ complexity which is worse than ours and other works. 

Further, we can compare our work with primal-dual IPMs (PD-IPMs). If we assume $n \geq \vartheta m^2$ to get a sublinear number of queries, we subsequently get $\Ocal(n^2)$ number of classical arithmetic operations, which is better than that of PD-IPMs ($\Ocal(n^{2.5})$). Further, PD-IPMs require preprocessing or modification of the Newton system to ensure that the iterates remain feasible \cite{mohammadisiahroudi2023inexact}. Our framework does not need any preprocessing or modification of the system. Further, the quantum complexity of the majority of quantum PD-IPMs depends on the norm of the optimal solution ($\omega$) \cite{mohammadisiahroudi2024efficient,mohammadisiahroudi2023inexact, mohammadisiahroudi2023improvements}. This term can be extremely large leading to a high quantum complexity in those works. To mitigate this dependence, preconditioning is needed which is used in the work of \cite{mohammadisiahroudi2023improvements}. In contrast, our algorithm does not need any preconditioning as our complexity is not dependent on the norm of the optimal solution. Instead, as stated earlier, our algorithm complexity depends on the condition number of the initial Newton system. The initial system condition number depends on the condition number of the input data and the norm of the initial solution which can be bounded moderately. These are some of the advantages that using a dual-only framework can yield. A comparison of our work with existing works is presented in Table~\ref{tab:compelxities}.

\begin{table}[H]
\centering
\renewcommand{\arraystretch}{1.5}
\resizebox{\textwidth}{!}{%
\begin{tabular}{cccc}
\hline
Algorithm                                                  & Linear System Solver   & Quantum Complexity                                                           & Classical Complexity                                                                  \\ \hline
Dual logarithmic barrier method~\cite{roos1997theory}      & Cholesky               &                                                                              & $\tilde{\Ocal}_{\frac{1}{\zeta}}(m^{2} n^{1.5})$                                      \\ \hline
PD IPM with Partial Updates~\cite{roos1997theory}          & Low rank updates       &                                                                              & $\tilde{\Ocal}_{\frac{1}{\zeta}}(n^{3})$                                              \\ \hline
PD Feasible IPM~\cite{roos1997theory}                      & Cholesky               &                                                                              & $\tilde{\Ocal}_{\frac{1}{\zeta}}(n^{3.5})$                                            \\ \hline
Robust IPM~\cite{van2020solving}                           & Spectral Approximation &                                                                              & $\tilde{\Ocal}_{\frac{1}{\zeta}}(n^{\omega_0})$                                       \\ \hline
PD II-IPM~\cite{monteiro2003convergence}                   & PCGM                   &                                                                              & $\tilde{\Ocal}_{\frac{1}{\zeta}}(n^{5}\bar{\chi}^2)$                                  \\ \hline
PD II-QIPM~\cite{mohammadisiahroudi2024efficient}          & QLSA+QTA               & $\tilde{\Ocal}_{n,\kappa_{A}, \omega, \frac{1}{\zeta}}(n^{4}\kappa_A^{4}\omega^{19}\|A\|)$   & $\tilde{\Ocal}_{\omega, \frac{1}{\zeta}}(n^{4} )$                     \\ \hline
PD IF-QIPM~\cite{mohammadisiahroudi2023inexact}            & QLSA+QTA               & $\tilde{\Ocal}_{n,\kappa_{A}, \omega, \frac{1}{\zeta}}(n^{2}\kappa_A^{2}\omega^{5} \|A\|)$   & $\tilde{\Ocal}_{\mu^0, \frac{1}{\zeta}}(n^{2.5})$                     \\ \hline
PD IR-IF-IPM~\cite{mohammadisiahroudi2023improvements}     & PCGM                   &                                                                              & $\tilde{\Ocal}_{\mu^0, \frac{1}{\zeta}}(n^{3.5}\bar{\chi}^2 )$                        \\ \hline
PD IR-IF-QIPM~\cite{mohammadisiahroudi2023improvements}    & QLSA+QTA               & $ \widetilde{\Ocal}_{ n,\kappa_{\Ahat}, \|\Ahat\|,\|\bhat\|,\mu^0,\frac{1}{\zeta}} ( m\sqrt{n} \bar{\chi}^2 \omega^2 )$ & $\tilde{\Ocal}_{\mu^0, \frac{1}{\zeta}}(n^{2.5})$   \\ \hline
IPM with approximate Newton steps~\cite{apers2023quantum}  & Q-spectral Approx.     & $\tilde{\Ocal}_{m,n, \frac{1}{\zeta}}(m^5 \sqrt{n})$                                 & $\tilde{\Ocal}_{\frac{1}{\zeta}}(n^{1.5} )$                                   \\ \hline
\textbf{This work}                                         & QLSA+QTA               & $\widetilde{\Ocal}_{n, \|AS^{-1}\|_F, \kappa(AS^{-1}), \mu^0,\frac{1}{\zeta}}\left(m \sqrt{n} \kappa^0 \right)$ & $\tilde{\Ocal}_{\frac{1}{\zeta}}(m {n}^{1.5})$                  \\ \hline
\end{tabular}%
}
\caption{\small Complexity of different IPMs for LO (Quantum complexity is expressed in the form of query complexity. Here, $\omega$ is the upper bound on the norm of the optimal solution, and $\bar{\chi}^2$ is an upper bound on the condition number of the Newton system). PD stands for Primal-Dual, and PCGM stands for Preconditioned Conjugate Gradient Method.}
\label{tab:compelxities}
\end{table}

To give a better comparison of our work, if we compare the complexity of our algorithm under the condition where it yields a sublinear number of queries, i.e. $n \geq \Ocal(m^2)$, we have the following complexities among the existing classical and quantum algorithms as shown in Table \ref{tab:compelxities-simple}.

\begin{table}[H]
\centering
\renewcommand{\arraystretch}{1.5}
\resizebox{\textwidth}{!}{%
\begin{tabular}{cccc}
\hline
Algorithm                                                  & Linear System Solver   & Quantum Complexity                                                           & Classical Complexity                                \\ \hline
Dual logarithmic barrier method~\cite{roos1997theory}      & Cholesky               &                                                                              & $\tilde{\Ocal}_{\frac{1}{\zeta}}(n^{2.5})$                                      \\ \hline
IPM with Partial Updates~\cite{roos1997theory}             & Low rank updates       &                                                                              & $\tilde{\Ocal}_{\frac{1}{\zeta}}(n^{3})$                                     \\ \hline
PD Feasible IPM~\cite{roos1997theory}                      & Cholesky               &                                                                              & $\tilde{\Ocal}_{\frac{1}{\zeta}}(n^{3.5})$                                   \\ \hline
Robust IPM~\cite{van2020solving}                           & Spectral Approximation &                                                                            & $\tilde{\Ocal}_{\frac{1}{\zeta}} \left( n^{\omega_0} \right)$ \\ \hline
PD IR-IF-QIPM~\cite{mohammadisiahroudi2023improvements}    & QLSA+QTA               & $\widetilde{\Ocal}_{ n,\kappa_{\Ahat}, \|\Ahat\|,\|\bhat\|,\mu^0, \frac{1}{\zeta}} ( n \bar{\chi}^2 \omega^2 )$ & $\tilde{\Ocal}_{\mu^0, \frac{1}{\zeta}}(n^{2.5})$ \\ \hline
IPM with approximate Newton steps~\cite{apers2023quantum}  & Q-spectral Approx.     & $\tilde{\Ocal}_{m,n, \frac{1}{\zeta}}(n^3)$                                  & $\tilde{\Ocal}_{\frac{1}{\zeta}}(n^{1.5} )$                                  \\ \hline
\textbf{This work}                                         & QLSA+QTA               & $\widetilde{\Ocal}_{n, \|AS^{-1}\|_F, \kappa(AS^{-1}), \mu^0, \frac{1}{\zeta}}\left( n  \kappa^0 \right)$ & $\tilde{\Ocal}_{\frac{1}{\zeta}} \left( {n}^{2}  \right) $                    \\ \hline
\end{tabular}%
}
\caption{\small Complexity of different IPMs for LO considering $n \geq \Ocal(m^2)$}
\label{tab:compelxities-simple}
\end{table}

\section{Conclusion} \label{sec:concl}
This paper proposes an algorithm for solving linear optimization problems using the dual logarithmic barrier function. In the convergence analysis, we showed that our algorithm has quadratic convergence toward the central path despite the inexact directions. Moreover, our algorithm enjoys an $\Ocal(\sqrt{n})$ IPM iteration complexity. Further, due to the promising speed-up that quantum linear system solvers provide for IPMs, we use this type of algorithm to speed up the most expensive part of the algorithm, solving the Newton system to acquire the search directions. We exploited QSVT of \cite{gilyen2019qsvt} and QTA of \cite{van2023quantum} as a subroutine for solving the normal equation system in each iteration. By applying iterative refinement, we achieve a complexity of $\tilde{\Ocal}(m\sqrt{n}  \kappa^0)$ number of queries to QRAM and $\tilde{\Ocal}(mn^{1.5} L)$ number of classical arithmetic operations. In comparison to the existing works, our work requires milder conditions to achieve a sublinear number of queries to that of the recent similar work of \cite{apers2023quantum}. Further, in comparison to primal-dual IPMs, our work has a better dependence with respect to the condition number and can yield a cheaper classical arithmetic cost under the assumption of $n \geq \vartheta m^2$ on the dimension of the problem. We highlight the fact that iterative refinement plays a major role in mitigating the impact of the condition number, by eliminating the precision dependence. Nevertheless, our complexity still has a dependence on the condition number of the initial Newton system which is dependent on the norm of the initial solution and the condition number of the input data. Efforts to initialize the algorithm by a solution with a moderately bounded norm in addition to preconditioning the input data matrix create an avenue of research to further improve the complexity of our algorithm.

\section*{Acknowledgements}
This work is supported by the Defense Advanced Research Projects Agency as part of the project W911NF2010022: {\em The Quantum
Computing Revolution and Optimization: Challenges and Opportunities}; and by the National Science Foundation (NSF) under Grant No. 2128527.
%=========================
% Bibliography
%=========================
\bibliographystyle{abbrvnat} %plainnat
\bibliography{bib}

\end{document}